\theoremstyle{plain}
\newtheorem{theorem}{Theorem}[section]
\newtheorem{lemma}[theorem]{Lemma}
\newtheorem{proposition}[theorem]{Proposition}
\newtheorem{corollary}[theorem]{Corollary}
\theoremstyle{definition}
\newtheorem{define}{Definition}[section]
\newtheorem{example}{Example}[section]
\theoremstyle{remark}
\newtheorem{remark}{Remark}[section]
\newtheorem*{acknowledgement}{Acknowledgement}
\begin{document}

\date\today

\title[Pinchuk scaling method]{Pinchuk scaling method on domains with non-compact automorphism groups}
\author{Ninh Van Thu, Nguyen Thi Kim Son and Nguyen Quang Dieu\textit{$^{1,2}$}}

\address{Ninh Van Thu}
\address{ School of Applied Mathematics and Informatics, Hanoi University of Science and Technology, No. 1 Dai Co Viet, Hai Ba Trung, Hanoi, Vietnam}
\email{thu.ninhvan@hust.edu.vn}

\address{Nguyen Thi Kim Son}
\address{~Department of Mathematics, Hanoi University, Nguyen Trai, Nam Tu Liem, Hanoi, Vietnam}
\email{kimsonnt.0611@gmail.com}

\address{Nguyen Quang Dieu}
\address{\textit{$^{1}$}~Department of Mathematics, Hanoi National University of Education, 136 Xuan Thuy, Cau Giay, Hanoi, Vietnam}
 \address{\textit{$^{2}$}~Thang Long Institute of Mathematics and Applied Sciences,
Nghiem Xuan Yem, Hoang Mai, HaNoi, Vietnam}
\email{ngquang.dieu@hnue.edu.vn}

\subjclass[2020]{Primary 32M05; Secondary 32H02, 32F18.}
\keywords{Automorphism group, scaling method, $h$-extendible domain}

\begin{abstract} In this paper, we characterize weakly pseudoconvex domains of finite type in $\mathbb C^n$ in terms of the boundary behavior of automorphism orbits by using the scaling method.
\end{abstract}
\maketitle

\section{introduction}

Let $\Omega$ be a domain in $\mathbb C^n$. The set of all automorphisms of $\Omega$, denoted by $\mathrm{Aut}(\Omega)$, makes a group under composition and this group is also a topological group with the topology of uniform convergence on compact sets of $\Omega$. By a classical theorem of H. Cartan (see \cite{Na71}), for a bounded domain $\Omega$ in $\mathbb C^n$, it follows that  $\mathrm{Aut}(\Omega)$ is non-compact if and only if  there exist a point $a \in \Omega$, a point $p \in \partial \Omega$, and automorphisms $\varphi_j \in \mathrm{Aut}\Omega)$ such that $\varphi_j(a) \to p$ as $j\to \infty$. Such a point $p$ is called a {\it boundary orbit accumulation point}. The local geometry of the boundary orbit accumulation point in turn gives global information about the characterization of domains. In particular, Greene and Krantz \cite{GK93} posed a conjecture that for a smoothly bounded pseudoconvex domain admitting a non-compact automorphism group, the boundary orbit accumulation point is of finite type in the sense of D'Angelo \cite{D'A82}. (In this paper, the finiteness of type is understood in the sense of D'Angelo.) The interested reader is referred to the recent papers \cite{IK99, KN15, Kr21} for this conjecture.

In this paper, we study the problem of characterizing domains in $\mathbb C^n$ with non-compact automorphism groups. The main results around this problem are due to B. Wong and J. P. Rosay \cite{Wo77, Ro79}, E. Bedford and S. Pinchuk \cite{BP89, BP91, BP95, BP99}, K.-T. Kim \cite{Ki90}, F. Berteloot \cite{Be94,Be03}, A. Isaev and S. Krantz \cite{IK97}, Do Duc Thai and the first author \cite{DN09}, the first and third authors \cite{NN20}. Almost all previous work requires the finiteness of type and either the strong pseudoconvexity (or even convexity), or pseudoconvexity only in dimension $2$. In contrast to these results, we provide a new characterization of weakly pseudoconvex domains of finite type in terms of the boundary behavior of automorphism orbits by using the scaling method, introduced by S. Pinchuk \cite{Pi81}. 

The scaling method may be briefly described as follows. Let $\Omega$ be a domain in $\mathbb C^{n+1}$ and $\{\varphi_j(a)\}$ be a sequence of automorphism orbits  converging to a boundary point $\xi_0$. Let us fix a small neighborhood $U_0$ of $\xi_0$. By using the reasonable composition, say $T_j$, of  polynomial automorphisms of $\mathbb C^{n+1}$, including translations and dilations, the sequence of domains $D_j:=T_j(U_0\cap \Omega)$ converges normally to a model $M_P$, given by
$$
M_P:=\left\{(z,w)\in \mathbb C^n\times \mathbb C\colon \mathrm{Re}(w)+P(z,\bar z)<0\right\},
$$
where $P$ is a real-valued polynomial on $\mathbb C^n$. When $P$ is a homogeneous plurisubharmonic polynomial, $M_P$ is called a \emph{local homogeneous model} at $\xi_0$. 

The sequence $F_j:=T_j\circ \varphi_j$ is in turn called a \emph{Pinchuk scaling sequence}. The most difficulty is to prove that the Pinchuk scaling sequence $\{F_j\}$ is normal, i.e., there exists a subsequence of $\{F_j\}$ that converges uniformly on compacta from $\Omega$ to $M_P$. We first assume, temporarily, that $\Omega$ is a bounded domain in $\mathbb C^n$. Then the approach of Bedford and Pinchuk conveniently splits into two steps. In the first step, E. Bedford and S. Pinchuk \cite{BP89, BP91, BP95, BP99} considered, alternatively, the convergence of the backward scaling sequence $\{F_j^{-1}\}$. Thanks to the boundedness of $\Omega$, the Montel theorem ensures that this sequence $\{F_j^{-1}\}$ contains a convergent subsequence.  Next, that limit, say $G$, was showed to be one-one from $M_P$ into $\Omega$ by using the uniform estimates of the Kobayashi metric (cf. \cite{Cat89} for $n=1$), or the Sibony metric (cf. \cite{Sib81} for corank one domains) on a family of the scaling domains $\{D_j\}$. In addition, the existence of a plurisubharmonic exhaustion function for $\Omega$ (cf. \cite{DF77}) yields the holomorphic map $G$ is surjective. In the second step, they treated a one-dimensional subgroup $\{h_t\}_{t\in\mathbb R}\subset \mathrm{Aut}(\Omega)$ defined by $h_t(z)=G\left(G(z)+(0',it)\right), \; z\in \Omega$.  This subgroup is parabolic in the sense that $h_t(z)$  tends to some boundary point $p\in \partial \Omega$ as $j\to \pm \infty$ for any $z\in\Omega$. A careful analysis of the holomorphic vector field $ H(z):=\frac{d}{dt}\mid_{t=0}h_t(z)$, defined on $\Omega$ and tangent to $\partial \Omega$ (by \cite{Fe74} each $h_t, t\in \mathbb R$, extends smoothly to the boundary), at the parabolic fixed point $\xi_0$ shows that the polynomial $P$ is a weighted homogeneous polynomial such as $P(z_1,\bar z_1)=c|z_1|^{2m}$ for $n=1$ and $P(z,\bar z)=c|z_1|^{2m}+|z_2|^2+\cdots +|z_n|^2$ for corank one domains, where $c$ is a positive constant.
  
Let us emphasize that the above-mentioned method does not work for unbounded domains. Therefore, an alternative approach is to prove directly the normality of $\{F_j\}$ and then the tautness of $\Omega$ indicates that $\{F_j^{-1}\}$ is also normal. Then, \cite[Lemma 4.1]{GK87}(see also \cite[Prop. 2.1]{DN09}) guarantees that the limit of $\{F_j\}$ is a biholomorphism from $\Omega$ onto $M_P$. The tautness of $\Omega$ easily follows from the existence of a plurisubharmonic peak function at $\xi_0$ (cf. \cite[Prop. 2.1]{Be94}). Therefore, our work boils down to verify the normality of $\{F_j\}$.
 
In 1991, S. Pinchuk \cite{Pi91} himself considered the case that $\xi_0$ is strongly pseudoconvex. Thanks to the locally convexifiability of $\Omega$ near $\xi_0$, S. Pinchuk proved that $\{F_j\}$ is normal. Therefore, $\Omega$ is biholomorphically equivalent to the unit ball $\mathbb B^n$, which gives a local version of  the Wong-Rosay theorem. Similar result was achieved by A. M. Efimov \cite{Ef95} for unbounded strongly pseudoconvex domains in $\mathbb C^n$. In addition, for convex domains in $\mathbb C^n$ the normality of our Pinchuk scaling sequence can instead be easily established (cf. \cite{BP95, Ga97, Ni09}). However, the Frankel scaling method given in \cite{Fra89} can be applied for convenience (cf. \cite{Ki90, Ki01, Zi17, Jo18}).

For unbounded weakly pseuconvex domains of finite type in $\mathbb C^2$, F. Berteloot \cite{Be94, Be03, Be06} obtained a significant progress by using the properties of polydics constructed by D. Catlin (see \cite{Cat89}) and the corresponding estimate of Kobayashi metric near $\xi_0$ to show the normality of the Pinchuk scaling sequence. Hence, $\Omega$ is biholomorphically equivalent to some local homogeneous model $M_P$. This result was generalized by Do Duc Thai and the first author \cite{DN09} for corank one domains in $\mathbb C^{n}$.

Recently, the first and third authors \cite{NN20} investiaged a pseudoconvex domain $\Omega\subset\mathbb C^n$ which is of finite Catlin's multitype near $\xi_0\in \partial \Omega$. When $M_P$ is a homogeneous model of finite type, there exists a plurisubharmonic peak function for $M_P$ at the origin. Therefore, the attraction property of analytic discs yields the normality of the Pinchuk scaling sequence. Consequently, $\Omega$ is biholomorphically equivalent to $M_P$ provided that the automorphism orbit $\{\varphi_j(a)\}$ converges $\Lambda$-nontangentially to $\xi_0$ (cf. Remark \ref{remark-lambda-nontangent} and \cite[Theorem $1.1$]{NN20}). 

One observes that the local homogeneous model $M_P$ depends heavily on the boundary behavior of $\{\varphi_j(a)\}$. More precisely, the boundary behavior of $\{\varphi_j(a)\}$ suggests some choice of dilations. The purpose of this paper is to give a characterization of local homogeneous models when the automorphism orbit $\{\varphi_j(a)\}$ accumulates at $\xi_0$ ``very" tangentially to $\partial \Omega$ - the remaining possibility (cf. Definitions \ref{lambda-tangent} and \ref{spherically-convergence}).

The first aim of this paper is to prove the following theorem, which says that only the unit ball admits an automorphism orbit accumulating at $\xi_0$ uniformly $\Lambda$-tangentially to $\partial\Omega$ (cf. Definition  \ref{lambda-tangent}).
\begin{theorem}\label{maintheorem1}
Let $\Omega$ be a pseudoconvex domain in $\mathbb C^{n+1}$ with $\mathcal{C}^\infty$-smooth boundary $\partial \Omega$. Let $\xi_0\in\partial \Omega$ be strongly $h$-extendible with Catlin's finite multitype $(2m_1,\ldots,2m_n, 1)$ and let $\Lambda=(1/2m_1, \ldots, 1/2m_n)$ (see Definition \ref{strongly-h-extendible}). Suppose that there exists a sequence $\{\varphi_j\}\subset \mathrm{Aut}(\Omega)$ such that $\{\varphi_j(a)\}$ converges uniformly $\Lambda$-tangentially to $\xi_0$ for some $a\in \Omega$ (see Definition  \ref{lambda-tangent}).
Then $\Omega$ is biholomorphically equivalent to the unit ball $\mathbb B^{n+1}$.
\end{theorem}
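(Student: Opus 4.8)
The plan is to run the Pinchuk scaling along the orbit $\{\varphi_j(a)\}$, but \emph{not} with the weight-$\Lambda$ dilations centered at $\xi_0$ (which would only recover the weakly pseudoconvex model $M_P$). The point is that a \emph{uniform} $\Lambda$-tangential approach pushes the orbit points past all the coordinate hyperplanes along which $\partial\Omega$ is weakly pseudoconvex, so that the boundary point $p_j$ lying below $\varphi_j(a)$ is already \emph{strongly} pseudoconvex; choosing the dilations so as to normalize the (degenerating) Levi form at $p_j$ then yields the ball as the limit model, after which the standard normality machinery applies. Concretely, normalize $\xi_0=0$ so that near $\xi_0$ one has $\Omega=\{\rho<0\}$ with $\rho(z,w)=\mathrm{Re}\,w+P(z,\bar z)+R(z,\mathrm{Im}\,w)$, where $P$ is a $\Lambda$-homogeneous plurisubharmonic polynomial without pluriharmonic terms and every monomial of $R$ has $\Lambda$-weight $>1$. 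Put $q_j:=\varphi_j(a)$, let $p_j\in\partial\Omega$ be nearest to $q_j$ (with $z$-part $\zeta^{(j)}$), and $\delta_j:=\mathrm{dist}(q_j,\partial\Omega)$; uniform $\Lambda$-tangential convergence gives $|\zeta^{(j)}_k|^{2m_k}\asymp r_j$ for every $k$, with $r_j\to 0$ and $\delta_j=o(r_j)$.

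\textbf{Step 1 (the adapted scaling).} Pass to coordinates centered at $p_j$ via a composition of polynomial automorphisms of $\mathbb C^{n+1}$ in which the Levi form of $\partial\Omega$ is diagonal with eigenvalues $\lambda^{(j)}_1,\dots,\lambda^{(j)}_n>0$, and let $\widehat T_j$ be the composition of the translation sending $p_j$ to $0$, that normalization, and the dilation $w\mapsto w/\delta_j$, $z_k\mapsto z_k\sqrt{\lambda^{(j)}_k/\delta_j}$. Then $\widehat T_j(q_j)\to(0',-1)$, an interior point of the Siegel half-space $\Sigma:=\{\mathrm{Re}\,w+|z_1|^2+\cdots+|z_n|^2<0\}$, which is biholomorphic to $\mathbb B^{n+1}$; set $\widehat F_j:=\widehat T_j\circ\varphi_j$ and $\widehat D_j:=\widehat T_j(U_0\cap\Omega)$.

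\textbf{Step 2 (the limit model is the ball).} Two facts must be checked, and both are exactly what ``uniformly'' buys. First, $p_j$ is \emph{strongly} pseudoconvex: since $\xi_0$ is $h$-extendible, the weakly pseudoconvex locus of $\partial\Omega$ near $\xi_0$ lies in the coordinate hyperplanes $\{z_k=0\}$, whereas uniform tangentiality forces $\zeta^{(j)}_k\neq 0$ for all $k$; hence $\lambda^{(j)}_k\asymp r_j^{(m_k-1)/m_k}>0$. Second, $\widehat D_j\to\Sigma$: under $\widehat T_j$ the Levi part $\sum_k\lambda^{(j)}_k|z_k|^2$ of $\rho$ is normalized to $\sum_k|z_k|^2$, while a higher-$\Lambda$-weight monomial of $P$ or of $R$ acquires a positive power of $\delta_j/r_j$ — e.g.\ the pure term $|z_k|^{2m_k}$ acquires the factor $\delta_j^{m_k-1}(\lambda^{(j)}_k)^{-m_k}\asymp(\delta_j/r_j)^{m_k-1}\to 0$ — so every term responsible for the finite type disappears in the limit and the scaled defining functions converge to $\mathrm{Re}\,w+|z_1|^2+\cdots+|z_n|^2$.

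\textbf{Step 3 (normality and conclusion).} As $\xi_0$ is $h$-extendible of finite type there is a plurisubharmonic peak function for $\Omega$ at $\xi_0$, whence $\Omega$ is taut (cf.\ \cite[Prop.\ 2.1]{Be94}). The core of the argument is then to prove directly that $\{\widehat F_j\}$ is normal, using the transplanted peak function and the attraction property of analytic discs for the moving family $\{\widehat D_j\}$, as in the treatment of the weakly pseudoconvex case in \cite{Be94, DN09, NN20}; every subsequential limit is valued in $\Sigma$. Since $\Sigma\cong\mathbb B^{n+1}$ is taut, $\{\widehat F_j^{-1}\}$ is normal as well, and \cite[Lemma 4.1]{GK87} (cf.\ \cite[Prop.\ 2.1]{DN09}) identifies a subsequential limit of $\{\widehat F_j\}$ as a biholomorphism of $\Omega$ onto $\Sigma\cong\mathbb B^{n+1}$. (If $\xi_0$ is already strongly pseudoconvex, i.e.\ $m_1=\cdots=m_n=1$, the hypothesis merely says an automorphism orbit accumulates at the strongly pseudoconvex point $\xi_0$, and one concludes directly from the local Wong--Rosay theorem of Pinchuk \cite{Pi91}.)

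\textbf{Main obstacle.} The crux is Step 2 together with the normality statement. One must make the order-of-magnitude bookkeeping rigorous: that the Levi eigenvalues of $\partial\Omega$ at $p_j$ behave like $r_j^{(m_k-1)/m_k}$ and the subleading terms of $\rho$ are dominated by correspondingly larger powers, so that the lone inequality $\delta_j=o(r_j)$ coming from \emph{uniform} $\Lambda$-tangentiality is precisely the condition yielding $\widehat D_j\to\Sigma$ — with merely plain $\Lambda$-tangentiality some $\zeta^{(j)}_k$ could vanish, $p_j$ could remain weakly pseudoconvex, and the limit would be a genuine non-spherical model $M_P$. One must also run the peak-function/tautness/disc-attraction argument uniformly over the family $\{\widehat D_j\}$ of scaled domains to secure the normality of $\{\widehat F_j\}$. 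Once that normality is in hand, identifying the limit as a biholomorphism onto the ball is routine.
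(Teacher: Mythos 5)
Your scaling is, up to bookkeeping, the same as the paper's: the dilation $z_k\mapsto z_k\sqrt{\lambda^{(j)}_k/\delta_j}$ with $\lambda^{(j)}_k\approx|\alpha_{jk}|^{2m_k-2}$ is exactly the paper's $z_k\mapsto z_k/\tau_{jk}$ with $\tau_{jk}=|\alpha_{jk}|\bigl(\epsilon_j/|\alpha_{jk}|^{2m_k}\bigr)^{1/2}$, the limit model is a positive-definite Hermitian model biholomorphic to $\mathbb B^{n+1}$, and the endgame (tautness from the peak function, attraction of analytic discs for normality of $\{\widehat F_j\}$, then normality of the inverses and the Greene--Krantz lemma) is the paper's Proposition \ref{pro-scaling} plus \cite[Prop.\ 2.1, 2.2]{DN09}. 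The paper centers at $\eta_j'=(\alpha_j,\beta_j+\epsilon_j)$ rather than the nearest boundary point and keeps the full Hermitian form $H(\tilde z)=\sum a_{kl}\tilde z_k\overline{\tilde z_l}$ in the limit instead of diagonalizing at each $p_j$, but these are cosmetic differences.

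There is, however, a genuine gap at the crux of your Step 2. You justify the strong pseudoconvexity of $p_j$ and the two-sided estimate $\lambda^{(j)}_k\asymp r_j^{(m_k-1)/m_k}$ by asserting that, since $\xi_0$ is $h$-extendible, the weakly pseudoconvex locus near $\xi_0$ lies in the coordinate hyperplanes, so that $\zeta^{(j)}_k\neq 0$ suffices. That assertion is not a consequence of $h$-extendibility, and the lower bound on the Levi eigenvalues is exactly what the unused hypothesis of \emph{strong} $h$-extendibility is for: Definition \ref{strongly-h-extendible} demands $dd^cP\geq\delta\,dd^c\sigma$ with $\sigma=\sum_k|z_k|^{2m_k}$, which by Remark \ref{remark-strongly-h-extendible} gives $\sum_{k,l}P_{z_k\bar z_l}(\alpha)w_k\bar w_l\gtrsim\sum_k m_k^2|\alpha_k|^{2m_k-2}|w_k|^2$, and hence (Lemma \ref{Cn-spherical-convergence}) the uniform lower bound $\epsilon_j^{-1}\sum_{k,l}P_{z_k\bar z_l}(\alpha_j)\tau_{jk}\tau_{jl}w_k\bar w_l\gtrsim|w|^2$ that makes the limit form $H$ positive definite. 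For a merely $h$-extendible point, $P$ is plurisubharmonic with no pluriharmonic monomials, but its complex Hessian at points with all coordinates nonzero can still be degenerate or have eigenvalues much smaller than $|\alpha_{jk}|^{2m_k-2}$; in that case your limit would only be a positive \emph{semi}-definite Hermitian model, not the ball. You also need condition (c) of Definition \ref{lambda-tangent} ($|\alpha_{j1}|^{2m_1}\approx\cdots\approx|\alpha_{jn}|^{2m_n}$) explicitly for the off-diagonal and higher-order estimates (it is what collapses the product $\prod_s(|\alpha_{js}|^{2m_s}/\epsilon_j)^{\cdot}$ to a single power of $|\alpha_{j1}|^{2m_1}/\epsilon_j$ in Lemma \ref{Cn-spherical-convergence}); ``uniform tangentiality forces $\zeta^{(j)}_k\neq0$'' is strictly weaker than what is used. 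Once the strong $h$-extendibility is invoked where you currently invoke $h$-extendibility, your argument closes and coincides with the paper's.
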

\begin{remark} The uniform $\Lambda$-tangential convergence of $\{\varphi_j(a)\}$ allows us to choose a suitable sequence of dilations (cf. see Equation (\ref{dilationj}) in Section \ref{S-h-extendible}) so that our model is an analytic ellipsoid that is biholomorphically equivalent to $\mathbb B^{n+1}$. However, Example \ref{ex3.1} in Section \ref{S-h-extendible} points out that without this uniform $\Lambda$-tangential convergence, there still exists an alternative sequence of dilations to get such a model. In addition, the explicit description for the automorphism group of the Thullen domains or the finite multitype models (cf. \cite{NNTK19}) demonstrates that any sequence of automorphism orbit converges $\Lambda$-nontangentially to some boundary orbit accumulation point. Therefore, it seems reasonable to expect that may drop the requirement of the uniformity of  $\Lambda$-tangentially convergences (see Remark \ref{model-behavior}).
\end{remark}

Now we turn to pseudoconvex Levi corank one domains in $\mathbb C^{n+1}$ which includes pseudoconvex domains of finite type in $\mathbb C^2$.  Then, the point $\xi_0$ is $h$-extendible (cf. \cite{Yu95}). In addition, if  $n=1$ and $\xi_0$ is strongly $h$-extendible, then $\Omega$ is biholomorphically equivalent to $\mathbb B^{2}$ by Theorem \ref{maintheorem1}. However, without the strongly $h$-extendibility, the notion of spherically $\frac{1}{2m}$-tangential convergence is necessary to determine if $\Omega$ is biholomorphically equivalent to $\mathbb B^{n+1}$ (cf. Definition \ref{spherically-convergence}).
 
 The second aim of this paper is to prove the following theorem.
 \begin{theorem}\label{maintheorem3}
Let $\Omega$ be a pseudoconvex domain in $\mathbb C^{n+1}$ with $\mathcal{C}^\infty$-smooth boundary $\partial \Omega$. Suppose that $\xi_0$ is a boundary point of $\Omega$ of D'Angelo finite type such that the Levi form has corank at most $1$ at $\xi_{0}$ and  there exists a sequence $\{\varphi_j\}\subset \mathrm{Aut}(\Omega)$ such that $\varphi_j(a)$ converges spherically $\frac{1}{2m}$-tangentially to $\xi_0$ for some $a\in \Omega$ (cf. Definition \ref{spherically-convergence}). Then $\Omega$ is biholomorphically equivalent to the unit ball $\mathbb B^{n+1}$.
\end{theorem}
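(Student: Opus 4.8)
The plan is to run the Pinchuk scaling machinery in the Levi corank one setting and reduce the problem to Theorem \ref{maintheorem1} whenever possible, handling the genuinely new case (non strongly $h$-extendible $\xi_0$) by exploiting the hypothesis of spherically $\frac{1}{2m}$-tangential convergence. First, since $\xi_0$ is of D'Angelo finite type and the Levi form has corank at most one there, a theorem of Yu (cited in the excerpt) guarantees that $\xi_0$ is $h$-extendible; let $(2m_1,\dots,2m_n,1)$ be Catlin's multitype and $\Lambda=(1/2m_1,\dots,1/2m_n)$. If $\xi_0$ happens to be \emph{strongly} $h$-extendible, then in the corank one case the weights are forced to be of the form $(2m,2,\dots,2,1)$ (only one ``large'' weight, the Levi-nondegenerate directions contributing weight $2$), and one checks that spherically $\frac{1}{2m}$-tangential convergence implies uniform $\Lambda$-tangential convergence; hence Theorem \ref{maintheorem1} applies verbatim and $\Omega\cong\mathbb B^{n+1}$. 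So the substance of the proof is the case where $\xi_0$ is $h$-extendible but not strongly $h$-extendible.

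In that remaining case I would proceed as in the introduction's outline. Choose a small neighborhood $U_0$ of $\xi_0$ and, using the spherically $\frac{1}{2m}$-tangential convergence of $\{\varphi_j(a)\}$, select the polynomial automorphisms $T_j$ of $\mathbb C^{n+1}$ (translations, rotations, and the anisotropic dilations dictated by the rate of tangential approach — compare Equation (\ref{dilationj})) so that $D_j:=T_j(U_0\cap\Omega)$ converges normally to a model $M_P$. The Levi corank one structure, together with Catlin's polydisc construction and the associated Kobayashi metric estimates near $\xi_0$ (Catlin \cite{Cat89}, Berteloot \cite{Be94,Be03,Be06}, and the corank one refinement of Do Duc Thai and the first author \cite{DN09}), forces $P$ to be of the special form $P(z,\bar z)=Q(z_1,\bar z_1)+|z_2|^2+\cdots+|z_n|^2$ with $Q$ a subharmonic (hence, by homogeneity, of the shape $c|z_1|^{2m}$ after the normalization coming from the scaling) weighted homogeneous polynomial; the spherical $\frac{1}{2m}$-tangentiality is exactly what pins down the single relevant weight $1/2m$ and makes the dilation in the $z_1$-direction the correct one. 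Then $F_j:=T_j\circ\varphi_j$ is the Pinchuk scaling sequence. Normality of $\{F_j\}$ follows from the attraction property of analytic discs at a plurisubharmonic peak function for $M_P$ at the origin — such a peak function exists because $M_P$ is a homogeneous model of finite type — combined with the tautness of $\Omega$, which is guaranteed by the existence of a local plurisubharmonic peak function at the finite type point $\xi_0$ (cf. \cite[Prop. 2.1]{Be94}). Tautness of $\Omega$ then makes $\{F_j^{-1}\}$ normal as well, and \cite[Lemma 4.1]{GK87} (see also \cite[Prop. 2.1]{DN09}) upgrades the limit to a biholomorphism $F\colon\Omega\to M_P$.

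It remains to identify $M_P$. With $P(z,\bar z)=c|z_1|^{2m}+|z_2|^2+\cdots+|z_n|^2$, the model $M_P$ admits the non-compact parabolic one-parameter subgroup $h_t(z)=F^{-1}\big(F(z)+(0',it)\big)$ pulled back to $\Omega$, whose generating holomorphic vector field $H$ is tangent to $\partial\Omega$ and (since each $h_t$ extends smoothly to the boundary by Fefferman's theorem \cite{Fe74}) can be analyzed at the parabolic fixed point $\xi_0$; the defining-function computation of Bedford–Pinchuk type shows $m=1$, i.e., $P(z,\bar z)=c|z_1|^2+|z_2|^2+\cdots+|z_n|^2$, so $M_P$ is an unbounded realization of the ball and therefore $\Omega\cong M_P\cong\mathbb B^{n+1}$. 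I expect the main obstacle to be the normality/tautness step together with the precise bookkeeping that spherically $\frac{1}{2m}$-tangential convergence yields the correct dilations and hence a model of the asserted special form: one must verify that the anisotropic stretching determined by the spherical tangential rate is compatible with Catlin's polydisc scales in the corank one directions, so that the scaled domains really do converge to $M_P$ and the Kobayashi (or Sibony) estimates are uniform in $j$. The passage through Theorem \ref{maintheorem1} in the strongly $h$-extendible subcase, and the Bedford–Pinchuk vector field argument in the general subcase, are then comparatively standard.
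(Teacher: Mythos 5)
There is a genuine gap, and it sits at the heart of the theorem. Your plan is to scale to a model of the form $c|z_1|^{2m}+|z_2|^2+\cdots+|z_n|^2$ and then invoke a Bedford--Pinchuk vector-field argument to conclude $m=1$. That last step is unsubstantiated and cannot work: $2m$ is the actual D'Angelo type of $\partial\Omega$ at $\xi_0$, which is arbitrary, and the Bedford--Pinchuk analysis (in the settings where it applies) produces precisely the model $|z_1|^{2m}+|z_2|^2+\cdots$ with $2m$ equal to the type --- it does not force $m=1$. If the scaling really produced $c|z_1|^{2m}+|z_2|^2+\cdots$ with $m>1$, then by \cite[Main Theorem]{CP01} the domain would \emph{not} be the ball. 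The actual mechanism, which your proposal misses, is that the spherically $\frac{1}{2m}$-tangential hypothesis dictates the anisotropic dilation $\tau_{j1}=|\alpha_{j1}|\left(\epsilon_j/|\alpha_{j1}|^{2m}\right)^{1/2}$ centered at the nearby boundary points $\eta_j'$ (after Cho's normalization $\Phi_{\eta_j'}$), and since $\epsilon_j=o(|\alpha_{j1}|^{2m})$ the rescaled Taylor coefficients $a_{k,l}(\eta_j')\epsilon_j^{-1}\tau_{j1}^{k+l}$ of $P$ tend to $0$ for all $k+l>2$ (Lemma \ref{spherical-convergence}), while condition (c), $\Delta P(\alpha_{j1})\gtrsim|\alpha_{j1}|^{2m-2}$, guarantees the surviving Hessian term $(2m)^2g(\theta_j)+g_{\theta\theta}(\theta_j)$ has a positive limit $a$. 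The scaled domains therefore converge directly to the Siegel domain $\mathrm{Re}(w)+a|z_1|^2+|z_2|^2+\cdots+|z_n|^2<0$, i.e.\ the ball; no vector-field step is needed or available. The normality/tautness part of your outline (peak function, attraction of analytic discs, \cite[Lemma 4.1]{GK87}) does match the paper's argument once the correct model is in hand.

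A secondary error: your proposed reduction of the strongly $h$-extendible subcase to Theorem \ref{maintheorem1} fails, because spherically $\frac{1}{2m}$-tangential convergence (Definition \ref{spherically-convergence}) constrains only the first component $\alpha_{j1}$ and the distance to the boundary; it imposes nothing on $\alpha_{j2},\dots,\alpha_{jn}$, whereas uniform $\Lambda$-tangential convergence requires $|\alpha_{j1}|^{2m_1}\approx\cdots\approx|\alpha_{jn}|^{2m_n}$ with all of these dominating $\mathrm{dist}(\eta_j,\partial\Omega)$. So the implication you assert does not hold (e.g.\ the orbit may have $\alpha_{jk}=0$ for $k\geq2$), and Theorem \ref{maintheorem1} does not apply verbatim. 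The paper needs no such case split: the single scaling described above, with $\tau_{jk}=\epsilon_j^{1/2}$ in the Levi-nondegenerate directions, handles all cases at once.
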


Now let $\Omega\subset \mathbb C^2$ be a pseudoconvex domain of finite type  near $\xi_0\in \partial\Omega$ with the type $2m$. Then the notion of uniformly $(\frac{1}{2m})$-tangential convergence (cf. Definition  \ref{lambda-tangent}) reduces to that of $(\frac{1}{2m})$-tangential convergence. Moreover, the notion of spherically $\frac{1}{2m}$-tangential convergence is exactly Definition \ref{spherically-convergence}. Therefore, Theorem \ref{maintheorem3} yields the following corollary.
\begin{corollary}\label{maintheorem2}
Let $\Omega$ be a pseudoconvex domain in $\mathbb C^{2}$ with $\mathcal{C}^\infty$-smooth boundary $\partial \Omega$. Suppose that $\xi_0\in\partial \Omega$ is of finite type $2m$. Suppose that there exists a sequence $\{\varphi_j\}\subset \mathrm{Aut}(\Omega)$ such that $\varphi_j(a)$ converges spherically $\frac{1}{2m}$-tangentially to $\xi_0$ for some $a\in \Omega$ (see Definition \ref{spherically-convergence}).
Then $\Omega$ is biholomorphically equivalent to the unit ball $\mathbb B^{2}$.
\end{corollary}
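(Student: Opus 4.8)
The plan is to obtain Corollary \ref{maintheorem2} as an immediate specialization of Theorem \ref{maintheorem3} to the case $n+1=2$, so the whole task reduces to checking that every hypothesis of that theorem is satisfied in $\mathbb C^2$. Three conditions must be verified: that $\xi_0$ is a boundary point of D'Angelo finite type, that the Levi form has corank at most $1$ at $\xi_0$, and that $\{\varphi_j(a)\}$ converges spherically $\frac{1}{2m}$-tangentially to $\xi_0$ in the sense of Definition \ref{spherically-convergence}.

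First I would note that the finite-type condition is already contained in the hypothesis: since we work in $\mathbb C^2$, the boundary being of finite type $2m$ at $\xi_0$ is precisely D'Angelo finite type there (in $\mathbb C^2$ all the standard notions of finite type coincide, and $2m<\infty$). Next, the corank condition is automatic in $\mathbb C^2$: at any point $p$ of a $\mathcal C^\infty$-smooth real hypersurface in $\mathbb C^2$, the complex tangent space $H_p(\partial\Omega)=T_p(\partial\Omega)\cap J\,T_p(\partial\Omega)$ has complex dimension $n=1$, so the Levi form, being a Hermitian form on a one-dimensional space, has rank $0$ or $1$; hence its corank is at most $1$. Finally, for $n=1$ the notion of spherically $\frac{1}{2m}$-tangential convergence appearing in the statement of the corollary is literally Definition \ref{spherically-convergence} (with $m_1=m$), as already observed in the paragraph preceding the corollary, so no reinterpretation is needed.

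Having checked all three hypotheses, I would then simply invoke Theorem \ref{maintheorem3} with $n=1$ to conclude that $\Omega$ is biholomorphically equivalent to $\mathbb B^{2}$, which is the assertion of the corollary. Since the argument is a direct specialization, there is no genuine obstacle in the proof of the corollary itself: the substantive work — establishing the normality of the Pinchuk scaling sequence for Levi corank one domains and identifying the limiting model as the ball under spherically tangential convergence — is carried out in the proof of Theorem \ref{maintheorem3}. The only point warranting a moment's attention is the automaticity of the corank-one condition in $\mathbb C^2$, which is exactly what places every smoothly bounded pseudoconvex domain of finite type in $\mathbb C^2$ within the Levi corank one framework.
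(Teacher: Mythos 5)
Your proposal is correct and follows essentially the same route as the paper, which deduces the corollary directly from Theorem \ref{maintheorem3} after observing that in $\mathbb C^{2}$ the corank-one condition is automatic and the notion of spherically $\frac{1}{2m}$-tangential convergence is exactly Definition \ref{spherically-convergence}. Your explicit verification of the three hypotheses (finite type, corank at most one, and the convergence condition) is precisely the content the paper leaves implicit in the paragraph preceding the corollary.
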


In the case that $\{\varphi_j(a)\}$ does not converge spherically $\frac{1}{2m}$-tangentially to $\xi_0$ for some $a\in \Omega$, we prove the following proposition, in which our model may be defined by a homogeneous polynomial of degree larger than $2$.
\begin{proposition} \label{1.2new}
Let $\Omega$ be a pseudoconvex domain in $\mathbb C^{2}$ with $\mathcal{C}^\infty$-smooth boundary $\partial \Omega$. Suppose that $\xi_0\in\partial \Omega$ is of finite type $2m$. Suppose that there exist a number $2 \leq \nu \leq m$, $a \in \Omega$ and a sequence
$f_j \subset Aut (\Omega)$ such that $f_j (a)$ converges spherically $\frac{1}{2m}$-tangentially of order $2\nu$ to $\xi_0$ (see Definition \ref{spherically-convergence-2}). Then $\Omega$ is biholomorphically equivalent to a model of the form 
$$
M_{Q}:=\left \{(z, w)\in \mathbb C^2\colon\mathrm{Re}(w)+Q(z)<0\right\},
$$
where $Q$ is a homogeneous polynomial of degree $2\nu$ which is not harmonic.
\end{proposition}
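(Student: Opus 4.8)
The plan is to run Pinchuk's scaling scheme along the automorphism orbit $\{f_j(a)\}$, exactly as in the proofs of Theorems \ref{maintheorem1} and \ref{maintheorem3}, but with the anisotropic dilations matched to the rate prescribed by the spherical $\tfrac{1}{2m}$-tangential convergence \emph{of order $2\nu$}; this changes the weight used in the $z$-variable and produces a limit model governed by a homogeneous polynomial of degree $2\nu$ rather than of degree $2m$. First I would fix a small neighbourhood $U_0$ of $\xi_0$ and holomorphic coordinates $(z,w)$ centred at $\xi_0$ in which
$$
\Omega\cap U_0=\bigl\{(z,w)\colon \rho(z,w):=\mathrm{Re}(w)+P(z,\bar z)+R(z,\bar z,\mathrm{Im}\,w)<0\bigr\},
$$
where $P$ is a subharmonic polynomial of degree $2m$ with $P(0)=0$, $dP(0)=0$ and no harmonic terms, and $R$ collects the terms of higher weighted order. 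Writing $\eta_j:=f_j(a)=(z_j,w_j)\to\xi_0$ and $\varepsilon_j:=-\rho(\eta_j)\to0^+$, I would unwind Definition \ref{spherically-convergence-2}: the hypothesis yields the comparison $|z_j|\asymp\varepsilon_j^{1/(2\nu)}$ together with the ``spherical'' control on the tangential component, which is precisely what forces the piece of $P$ visible at the scale of $\eta_j$ to be the homogeneous part of degree $2\nu$.

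Next I would introduce the polynomial automorphisms $\tau_j$ of $\mathbb C^2$ — a translation sending $\eta_j$ to the origin, followed by a unitary rotation in $z$ and a parabolic shear $w\mapsto w+(\text{polynomial in }z)$ straightening $\partial\Omega$ at $\eta_j$ — and the anisotropic dilations $\Lambda_j(z,w):=(\varepsilon_j^{-1/(2\nu)}z,\ \varepsilon_j^{-1}w)$, and set $T_j:=\Lambda_j\circ\tau_j$. A direct computation shows that $T_j$ multiplies a monomial $z^p\bar z^q$ of $P$ by $\varepsilon_j^{(p+q)/(2\nu)-1}$, so that in the limit only the terms of weighted degree $2\nu$ survive; the order-$2\nu$ hypothesis is exactly the statement that, after the normalization $\tau_j$, the terms of lower degree do not blow up and the surviving polynomial $Q$ is not identically zero. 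Since $P$ has no harmonic terms, $Q$ retains a mixed monomial $z^p\bar z^q$ with $p,q\ge1$, $p+q=2\nu$; hence $Q$ is a nonzero homogeneous subharmonic polynomial of degree $2\nu$ that is not harmonic, and the rescaled domains $D_j:=T_j(\Omega\cap U_0)$ converge normally to
$$
M_Q=\bigl\{(z,w)\in\mathbb C^2\colon\mathrm{Re}(w)+Q(z)<0\bigr\}.
$$

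To conclude I would produce the biholomorphism in the now-standard way. Put $F_j:=T_j\circ f_j\colon\Omega\to D_j$, so that $F_j(a)=T_j(\eta_j)$ sits at a fixed interior point of $M_Q$. Since $\xi_0$ is of D'Angelo finite type in $\mathbb C^2$, $\Omega$ admits a local plurisubharmonic peak function at $\xi_0$ and is therefore taut near $\xi_0$ (cf. \cite[Prop. 2.1]{Be94}); on the other hand $M_Q$, being pseudoconvex of finite type with homogeneous defining polynomial, is taut and carries a plurisubharmonic peak function at the origin. Feeding these into the usual normal-family argument — via Catlin's polydisc estimates for the Kobayashi metric near the finite-type point $\xi_0$ as in \cite{Be94,Be03,DN09}, or equivalently the attraction property of analytic discs at the peak points (cf. \cite{NN20}) — I would obtain that $\{F_j\}$ is normal with every subsequential limit $F$ mapping $\Omega$ into $M_Q$ (the peak function of $M_Q$ rules out $F(\Omega)\subset\partial M_Q$), and that $\{F_j^{-1}\}$ is normal with subsequential limit $G\colon M_Q\to\Omega$ (the peak function of $\Omega$ at $\xi_0$ rules out $G(M_Q)\subset\partial\Omega$). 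Then \cite[Lemma 4.1]{GK87} (see also \cite[Prop. 2.1]{DN09}) forces $F$ to be a biholomorphism of $\Omega$ onto $M_Q$ with $F^{-1}=G$, which is the assertion.

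The main obstacle will be concentrated in the last two paragraphs and is twofold. First, one must check, starting from the precise form of Definition \ref{spherically-convergence-2}, that the rescaled defining functions converge to $\mathrm{Re}(w)+Q(z)$ with $Q$ homogeneous of degree exactly $2\nu$ — the danger being a blow-up of the sub-$2\nu$-degree terms of $P$, which the order-$2\nu$ condition is designed to prevent — and that the mixed part of $Q$ is non-trivial, so that $Q$ is genuinely non-harmonic. Second, and more serious, is the normality of $\{F_j\}$: in the weakly pseudoconvex finite-type regime this is not automatic and rests on the Catlin--Berteloot estimates for the invariant metrics near $\xi_0$, which are available precisely because we work in $\mathbb C^2$ with $\xi_0$ of finite type. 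Everything else — the local normal form, the explicit dilations, and the passage from normality to a biholomorphism via \cite[Lemma 4.1]{GK87} — is routine given the tools already invoked in the paper.
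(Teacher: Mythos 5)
Your overall architecture (translate to a nearby boundary point, shear away pluriharmonic terms, dilate anisotropically, identify the limit model, then get normality of $\{F_j\}$ and $\{F_j^{-1}\}$ from tautness/peak functions and conclude via \cite[Lemma 4.1]{GK87}) matches the paper's, and your final paragraph on normality is essentially the paper's argument via Proposition \ref{pro-scaling} and \cite[Prop. 2.1]{DN09}. However, there is a concrete error in the middle step: your dilation $\Lambda_j(z,w)=(\varepsilon_j^{-1/(2\nu)}z,\varepsilon_j^{-1}w)$ is mis-calibrated, and it rests on the claim that Definition \ref{spherically-convergence-2} yields $|z_j|\asymp\varepsilon_j^{1/(2\nu)}$. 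It does not: condition (ii) says $\varepsilon_j=o(|\alpha_j|^{2m})$, i.e.\ $|\alpha_j|\gg\varepsilon_j^{1/(2m)}\geq\varepsilon_j^{1/(2\nu)}$, so $|\alpha_j|$ is of strictly larger order than $\varepsilon_j^{1/(2\nu)}$. The correct dilation factor, used in the paper, is $\tau_j=|\alpha_j|\bigl(\varepsilon_j/|\alpha_j|^{2m}\bigr)^{1/(2\nu)}$, which genuinely involves $|\alpha_j|$. With this choice the bidegree-$(l,l')$ Taylor coefficient of $P$ at $\alpha_j$, of size $|\alpha_j|^{2m-l-l'}|g_{l,l'}(\theta_j)|$, is rescaled to $\bigl(\varepsilon_j/|\alpha_j|^{2m}\bigr)^{\frac{l+l'}{2\nu}-1}|g_{l,l'}(\theta_j)|$: bounded at $l+l'=2\nu$, vanishing for $l+l'>2\nu$, and vanishing for $l+l'<2\nu$ precisely because of condition (iii) (this is Lemma \ref{higher-order-model}). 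With your factor $\varepsilon_j^{1/(2\nu)}$ instead, the rescaled degree-$2\nu$ coefficient is $|\alpha_j|^{2m-2\nu}|g_{l,l'}(\theta_j)|\to 0$ whenever $\nu<m$, so the limit polynomial is identically zero and the limit domain degenerates to $\{\mathrm{Re}(w)<0\}\times\mathbb C$; the whole argument then collapses. Example \ref{KNex} makes this concrete: there $2m=8$, $\nu=2$, $\alpha_j=j^{-1/8}$, $\varepsilon_j=j^{-2}$, and the correct $\tau_j=j^{-3/8}$, whereas your recipe gives $j^{-1/2}$.

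A secondary, smaller point: you attribute the non-harmonicity of $Q$ to the fact that $P$ has no harmonic terms. That is not the operative mechanism. After the shear the limit is $Q(z)=\sum_{k=1}^{2\nu-1}a_{k,2\nu-k}z^k\bar z^{2\nu-k}$, automatically free of pure terms, but a priori all these coefficients could vanish in the limit along the sequence $\theta_j=\arg\alpha_j$. It is condition (iv) of Definition \ref{spherically-convergence-2}, namely $\liminf_j|g_{l_0,2\nu-l_0'}(\theta_j)|>0$ for some $l_0+l_0'=2\nu$, that forces $Q\not\equiv 0$ and hence non-harmonic; you should invoke it explicitly at this point rather than the structure of $P$.
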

\begin{remark} We note that Example \ref{KNex} in Section \ref{Sec5} illustrates that if the sequence of automorphism orbits does not converge spherically $\frac{1}{2m}$-tangentially to a boundary point, then our domain $\Omega$ is not biholomorphically equivalent to the unit ball $\mathbb B^2$ but to $M_Q$ with $\deg(Q)=4$.
\end{remark}

The organization of this paper is as follows: In Sections \ref{technical-section}, we recall some basic definitions and results needed later. In Section \ref{S-h-extendible}, we present the notion of $\Lambda$-tangential convergence and give a proof of Theorem \ref{maintheorem1}. Next, the notion of spherical $\frac{1}{2m}$-tangential convergence and the proof of Theorem \ref{maintheorem3} are introduced in Section \ref{S-corank1}. Finally, the proof of Proposition \ref{1.2new} is given in Section \ref{Sec5}.

\section{Preliminaries}\label{technical-section}
First of all, we recall the following definition (see \cite{GK87, Kr21}, or \cite{DN09}). 
\begin{define} Let $\{\Omega_i\}_{i=1}^\infty$ be a sequence of domains in $\mathbb C^n$. The sequence $\{\Omega_i\}_{i=1}^\infty$ is said to \emph{converge normally} to a domain $\Omega_0\subset \mathbb C^n$ if the following two conditions hold:
	\begin{enumerate}
		\item[(i)] If a compact set $K$ is contained in the interior (i.e., the largest open subset) of  $\displaystyle\bigcap_{j\geq m} \Omega_j$ for some positive integer $m$, then $K\subset \Omega_0$.
		\item[(ii)] If a compact subset $K'\subset \Omega_0$ , then there exists a constant $m>0$ such that $\displaystyle K'\subset \bigcap_{j\geq m} \Omega_j$.
	\end{enumerate}  
In addition, when a sequence of  map $\varphi_j\colon \Omega_j\to\mathbb C^m$ converges uniformly on compact sets to a map $\varphi_j\colon \Omega\to\mathbb C^m$ then we shall say that $\varphi_j$ \emph{converges normally} to $\varphi$. 
\end{define}

Next, let us recall the following definition (cf. \cite{Yu95}).
\begin{define} Let $\Lambda=(\lambda_1,\ldots,\lambda_n)$ be a fixed $n$-tuple of positive numbers and $\mu>0$. We denote by $\mathcal{O}(\mu,\Lambda)$ the set of smooth functions $f$ defined near the origin of $\mathbb C^n$ such that
$$
D^\alpha \overline{D}^\beta f(0)=0~\text{whenever}~ \sum_{j=1}^n (\alpha_j+\beta_j)\lambda_j \leq \mu.
$$
If $n=1$ and $\Lambda = (1)$ then we use $\mathcal{O}(\mu)$ to denote the functions vanishing to order at least $\mu$ at the origin. Here and in what follows, $D^\alpha$ and $\overline{D}^\beta$ denote the partial differential operators
\[
\frac{\partial^{|\alpha|}}{\partial z_1^{\alpha_1}\cdots \partial z_n^{\alpha_n }}~\text{and}~\frac{\partial^{|\beta|}}{\partial \bar z_1^{\beta_1}\cdots \partial \bar z_n^{\beta_n }},
\]
respectively. Furthermore, $\lesssim$ and $\gtrsim$ denote inequality up to a positive constant. Moreover, we will use $\approx $ for the combination of $\lesssim$ and $\gtrsim$

\end{define}

Finally, in order to give proofs of Theorem \ref{maintheorem1} and Theorem \ref{maintheorem2}, let us recall the following proposition that is the main ingredient  in our argument (see \cite[Prop. 4.3]{NN20}).
\begin{proposition}[\cite{NN20}]\label{pro-scaling} Let $\omega$ be a domain in $\mathbb C^k$, $a\in \omega$ and $\sigma_j\colon \omega \to \Omega_j$ be a sequence of holomorphic mappings such that $\{\sigma_j(a)\}\Subset M_P$. If $M_P$ is of finite type, then $\{\sigma_j\}$ contains a subsequence that converges locally uniformly to a holomorphic map $\sigma\colon \omega \to M_P$. 
\end{proposition}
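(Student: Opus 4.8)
The plan is a normal-families argument powered by the plurisubharmonic peak functions carried by the finite-type model $M_P$, as indicated in the introduction. Throughout we are in the situation where the scaling domains $\{\Omega_j\}$ converge normally to $M_P$, and we fix a compact set $K\Subset M_P$ with $\sigma_j(a)\in K$ for every $j$. There are two things to prove: \emph{(A)} the sequence $\{\sigma_j\}$ is locally uniformly bounded on $\omega$; and \emph{(B)} any subsequential (locally uniform) limit of $\{\sigma_j\}$ maps $\omega$ into $M_P$, not merely into $\overline{M_P}$. Granting \emph{(A)} and \emph{(B)}, Montel's theorem produces a subsequence converging locally uniformly to a holomorphic map $\sigma\colon\omega\to\mathbb C^{n+1}$ with $\sigma(\omega)\subset\overline{M_P}$, and \emph{(B)} upgrades this to $\sigma(\omega)\subset M_P$, which is the assertion. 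The finiteness of type of $M_P$ enters only through the existence of bounded continuous plurisubharmonic peak functions at each boundary point of $M_P$ — and, via its dilation structure or a bounded realization, ``at infinity'' — together with the uniform lower bounds for the Kobayashi (or Sibony) metric of the scaled domains $\Omega_j$ that these provide; I would quote these from the finite-type literature (Bedford--Fornaess, Catlin, Yu, Cho) and from \cite{NN20}.

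For \emph{(B)}, let $\sigma=\lim_k\sigma_{j_k}$ be such a limit; since $\sigma_{j_k}(\omega)\subset\Omega_{j_k}$ and $\Omega_{j_k}\to M_P$ normally, one has $\sigma(\omega)\subset\overline{M_P}$. Suppose $\sigma(q)\in\partial M_P$ for some $q\in\omega$, and let $\varphi$ be a continuous plurisubharmonic peak function of $M_P$ at $\sigma(q)$, so $\varphi<0$ on $\overline{M_P}\setminus\{\sigma(q)\}$ and $\varphi(\sigma(q))=0$. Restricting $\sigma$ to a complex line through $q$ and composing with $\varphi$ yields a subharmonic function on a disc that is $\le 0$ and attains the value $0$ at $q$; hence it is identically $0$, forcing $\sigma$ to be constant $\equiv\sigma(q)$ on that disc and, by connectedness of $\omega$, on all of $\omega$. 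This contradicts $\sigma(a)=\lim_k\sigma_{j_k}(a)\in\overline K\subset M_P$, on which $\varphi<0$. Thus $\sigma(\omega)\subset M_P$. (The only delicate point is the subharmonicity of $\varphi\circ\sigma$ where $\sigma$ meets $\partial M_P$; this is the standard removable-set argument for compositions of plurisubharmonic functions with holomorphic maps into $\overline{M_P}$.)

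For \emph{(A)}, I would show that for every relatively compact ball $B\Subset\omega$ with $a\in B$ the images $\sigma_j(B)$ lie, for $j$ large, in a fixed compact subset of $M_P$; a finite-cover argument then gives local uniform boundedness on all of $\omega$. The heart of this is the \emph{attraction of analytic discs}: if $\sigma_j(y_j)\to\zeta$ for some $\zeta\in\partial M_P\cup\{\infty\}$ and $y_j\to y$ in the interior of $B$, then a localized plurisubharmonic peak function of $M_P$ at $\zeta$ survives, by the normal convergence $\Omega_j\to M_P$, as a plurisubharmonic barrier for $\Omega_j$ near $\zeta$ for $j$ large; the usual sub-mean-value and Harnack argument then forces $\sigma_j\to\zeta$ locally uniformly on $B$, in particular $\sigma_j(a)\to\zeta$, contradicting $\sigma_j(a)\in K\Subset M_P$. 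The case $\zeta=\infty$ is dealt with identically using the peak function at infinity; alternatively one invokes directly the uniform-in-$j$ lower bounds for the Kobayashi metric of $\Omega_j$, which force the Kobayashi balls of $\Omega_j$ of a fixed radius around $K$ to stay in a fixed compact subset of $M_P$, and then applies the distance-decreasing property of the holomorphic maps $\sigma_j$. One checks in this way that the set of points of $\omega$ admitting such a neighbourhood $B$ is open, closed and nonempty, hence equal to $\omega$.

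The step I expect to be the main obstacle is the uniformity in $j$ underlying \emph{(A)}: transporting the peak-function barriers (equivalently, the Catlin-type metric estimates) of the limit model $M_P$ to the whole family $\{\Omega_j\}$ with constants independent of $j$, which is exactly the scale-invariance that the finite-type machinery is designed to provide, and in addition handling the point at infinity — where one needs to know that $M_P$, being of finite type, is taut (e.g.\ admits a bounded realization with a suitable boundary point corresponding to infinity). Once this uniform control is secured, \emph{(B)}, Montel's theorem and the normal convergence $\Omega_j\to M_P$ combine to give the desired convergent subsequence of $\{\sigma_j\}$ with limit a holomorphic map $\omega\to M_P$.
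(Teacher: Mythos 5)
Your proposal is correct and follows essentially the same route as the source: the paper does not reprove this proposition but imports it from \cite[Prop.~4.3]{NN20}, whose mechanism --- as the introduction here also describes --- is precisely the existence of plurisubharmonic peak functions for the finite-type model $M_P$, the attraction property of analytic discs to get local uniform boundedness of $\{\sigma_j\}$, and then Montel together with the maximum principle to force the limit into $M_P$ rather than $\overline{M_P}$. The points you flag as delicate (uniformity in $j$ of the barriers on the scaled domains $\Omega_j$, and the treatment of the point at infinity) are indeed where the technical work in \cite{NN20} lies.
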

\section{The behaviour of automorphism orbits accumulating at a boundary point of an $h$-extendible domain in $\mathbb C^n$}\label{S-h-extendible}
\subsection{$\Lambda$-tangential convergence}\label{Ss3.1}
Throughout this subsection, let $\Omega$ be a domain in $\mathbb C^n$ and assume that $\xi_0\in \partial \Omega $ is an $h$-extendible boundary point (cf. \cite{Yu95, DH94}). Let $\rho$ be a local defining function for $\Omega$ near $\xi_0$ and let the multitype $\mathcal{M}(\xi_0)=(2m_1,\ldots,2m_n,1)$ be finite (see \cite{Cat84}). (Note that because of the pseudoconvexity of $\Omega$, the integers $2m_1,\ldots,2m_n$ are all even.) Let us denote by $\Lambda=(1/2m_1,\ldots,1/2m_n)$. By the definition of multitype, there are distinguished coordinates $(z,w)=( z_1, \ldots,z_n,w)$ such that $\xi_0=0$ and $\rho(z,w)$ can be expanded near $0$ as follows:
$$
\rho(z,w)=\mathrm{Re}(w)+P(z)+Q(z,w),
$$ 
 where $P$ is a $\Lambda$-homogeneous plurisubharmonic polynomial that contains no pluriharmonic monomials, $Q$ is smooth and satisfies 
 $$
 |Q(z,w)|\leq C \left( |w|+ \sum_{j=1}^n |z_j|^{2m_j} \right)^\gamma,
 $$ 
 for some constant $\gamma>1$ and $C>0$. 
In what follows, $\mathrm{dist}(z,\partial\Omega)$ denotes the Euclidean distance from $z$ to $\partial\Omega$.
\begin{define}\label{lambda-tangent}
We say that a sequence $\{\eta_j=(\alpha_j,\beta_j)\}\subset  \Omega$ with $\alpha_j=(\alpha_{j 1},\ldots,\alpha_{j n})$, \emph{converges uniformly $\Lambda$-tangentially to $\xi_0$} if the following conditions hold:
\begin{itemize}
\item[(a)] $|\mathrm{Im}(\beta_j)|\lesssim |\mathrm{dist}(\eta_j,\partial \Omega)|$;
\item[(b)] $|\mathrm{dist}(\eta_j,\partial \Omega)|=o(|\alpha_{jk}|^{2m_k})$ for $1\leq k\leq n$;
\item[(c)] $|\alpha_{j1}|^{2m_1}\approx |\alpha_{j2}|^{2m_2}\approx \cdots\approx |\alpha_{jn}|^{2m_n}$.
\end{itemize}
\end{define}
\begin{remark} \label{remark-lambda-nontangent}It is well-known that $\{\eta_j\}\subset \Omega$ converges nontangentially to $\xi_0$ if $|\mathrm{Im}(\beta_j)|\lesssim |\mathrm{dist}(\eta_j,\partial \Omega)|$ and $|\alpha_{j k}|\lesssim|\mathrm{dist}(\eta_j,\partial \Omega)|$ for every $1\leq k\leq n$. Nevertheless, such sequence converges $\Lambda$-nontangentially to $\xi_0$ if $|\mathrm{Im}(\beta_j)|\lesssim |\mathrm{dist}(\eta_j,\partial \Omega)|$ and $|\alpha_{j k}|^{2m_k}\lesssim|\mathrm{dist}(\eta_j,\partial \Omega)|$ for every $1\leq k\leq n$ (cf. \cite{NN20}).
\end{remark}
Denote by
$$
\sigma(z):=\sum_{k=1}^n |z_k|^{2m_k}.
$$ 
\begin{define}\label{strongly-h-extendible} We say that a boundary point $\xi_0\in \partial \Omega$ is \emph{strongly $h$-extendible} if there exists $\delta>0$ such that $P(z)-\delta \sigma(z)$ is plurisubharmonic, i.e. $dd^c P\geq \delta dd^c \sigma$. 
\end{define}
\begin{remark}\label{remark-strongly-h-extendible} The notion of strongly $h$-extendibility is exactly that $M_P$ is \emph{homogeneous finite diagonal type} given in \cite{He92, He16}. Let $\xi_0\in \partial \Omega$ be strongly $h$-extendible. Then by $dd^c P\gtrsim  dd^c \sigma$, it follows that $\xi_0\in \partial \Omega$ is in fact  $h$-extendible and
\begin{align*}
\sum_{k, l=1}^n \frac{\partial^2P}{\partial z_k\partial \bar z_l} (\alpha) w_j\bar w_l&\gtrsim \sum_{k, l=1}^n \frac{\partial^2\sigma}{\partial z_k\partial \bar z_l} (\alpha) w_j\bar w_l\\
&\gtrsim m_1^2|\alpha_1|^{2m_1-2}|w_1|^2+\cdots+m_n^2|\alpha_n|^{2m_n-2}|w_n|^2
\end{align*}
for all $\alpha,w\in \mathbb C^n$. 
\end{remark}

In the sequel, we will assume that $\xi_0\in \partial\Omega$ is a strongly $h$-extendible point and let $\{\epsilon_j\}\subset \mathbb R^+$ be a given sequence. Then we define the sequence $\tau_j=(\tau_{j1},\ldots,\tau_{jn})$, associated to $\{\epsilon_j\}$, as follows:
\begin{equation*}
\tau_{jk}:=|\alpha_{k}|.\left(\dfrac{\epsilon_j}{|\alpha_{jk}|^{2m_k}}\right)^{1/2},\; j\geq 1, 1\leq k\leq n.
\end{equation*}
A simple calculation shows that $\tau_{jk}^{2m_k}=\epsilon_j.\left(\frac{\epsilon_j}{|\alpha_{jk}|^{2m_k}}\right)^{m_k-1}\lesssim \epsilon_j$. Hence, we get the following estimates
\begin{align}\label{tau-estimate}
\epsilon_j^{1/2}\lesssim \tau_{jk}\lesssim \epsilon_j^{1/2m_k}.
\end{align}

In what follows, we assign weights $\frac{1}{2m_1}, \ldots,\frac{1}{2m_{n}}, 1$ to the variables $z_1,\ldots, z_{n}, w$, respectively and denote by $wt(K):=\sum_{j=1}^{n} \frac{k_j}{2m_j}$ the weight of an $n$-tuple $K=(k_1, \ldots, k_{n})\in \mathbb Z^{n}_{\geq0}$. We note that  $wt(K+L)=wt(K)+wt(L)$ for any $K, L\in \mathbb Z^{n}_{\geq0}$.  

In order to prove Theorem \ref{maintheorem1}, we need the following lemmas. First of all, from (\ref{tau-estimate}) one easily obtains the following lemma.
 \begin{lemma}\label{higher-weight} Let $f(z,w)$ be a $\mathcal{C}^\infty$-smooth function defined in a neighborhood of the origin in $ \mathbb C^{n+1}$ vanishing to weight order greater than $1$ at the origin. Then 
 $$
 f(\tau_{j1} z_1,\ldots,\tau_{jn} z_n,\epsilon_j w)=o(\epsilon_j).
 $$
 \end{lemma}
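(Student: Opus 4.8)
The plan is to expand $f$ in its Taylor series around the origin and exploit the weight assignment together with the estimates in (\ref{tau-estimate}). First I would write, for any fixed large integer $N$,
\[
f(z,w)=\sum_{\substack{K\in\mathbb Z^{n}_{\geq0},\, l\geq0\\ wt(K)+l\leq N}} c_{K,l}\, z^K w^l + R_N(z,w),
\]
where $R_N$ is the remainder, which vanishes to (ordinary) order $>N$ at the origin, hence is $O\big((|z|+|w|)^{N+1}\big)$ on a fixed neighborhood of $0$. The hypothesis that $f$ vanishes to weight order greater than $1$ means precisely that every monomial $z^K w^l$ appearing with $c_{K,l}\neq0$ has $wt(K)+l>1$.

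Next I would substitute $(z,w)\mapsto(\tau_{j1}z_1,\dots,\tau_{jn}z_n,\epsilon_j w)$ and estimate each surviving monomial. By (\ref{tau-estimate}) one has $\tau_{jk}\lesssim\epsilon_j^{1/2m_k}$, so for $(z,w)$ in a fixed compact set
\[
\big|(\tau_{j1}z_1)^{k_1}\cdots(\tau_{jn}z_n)^{k_n}(\epsilon_j w)^l\big|
\lesssim \tau_{j1}^{k_1}\cdots\tau_{jn}^{k_n}\,\epsilon_j^{l}
\lesssim \epsilon_j^{\,k_1/2m_1+\cdots+k_n/2m_n}\,\epsilon_j^{l}
= \epsilon_j^{\,wt(K)+l}.
\]
Since $wt(K)+l>1$ for every monomial that occurs and there are finitely many such monomials with $wt(K)+l\le N$, the whole polynomial part is $O(\epsilon_j^{\,1+\delta_0})$ for some $\delta_0>0$ (namely $\delta_0=\min\{wt(K)+l-1\}$ over the finitely many occurring monomials with weight $\le N$), hence $o(\epsilon_j)$. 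For the remainder $R_N$, using $\tau_{jk}\lesssim\epsilon_j^{1/2m_k}\le \epsilon_j^{1/2m}$ with $m:=\max_k m_k$ and $\epsilon_j\le\epsilon_j^{1/2m}$ for $\epsilon_j$ small, one gets $|R_N(\tau_{j1}z_1,\dots,\epsilon_j w)|\lesssim \epsilon_j^{(N+1)/2m}$ uniformly on the compact set, which is $o(\epsilon_j)$ as soon as $N$ is chosen with $(N+1)/2m>1$. Combining the two bounds gives $f(\tau_{j1}z_1,\dots,\tau_{jn}z_n,\epsilon_j w)=o(\epsilon_j)$, uniformly on compact subsets of the domain of definition.

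There is essentially no serious obstacle here; the only mildly delicate point is bookkeeping — making sure the weight of every occurring monomial is \emph{strictly} larger than $1$ (so that a uniform gap $\delta_0>0$ exists among the finitely many monomials of bounded weight) and that the Taylor remainder, though not controlled by the weight filtration, is still negligible because every $\tau_{jk}$ and $\epsilon_j$ is at most a fixed positive power of $\epsilon_j$. Both are immediate from the definition of $\mathcal O(\mu,\Lambda)$-type vanishing and from (\ref{tau-estimate}); one chooses $N$ large at the very end to absorb the remainder.
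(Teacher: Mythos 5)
Your argument is correct and is exactly the routine verification the paper has in mind: the paper omits the proof entirely, asserting only that the lemma follows easily from (\ref{tau-estimate}), and your writeup (monomial-by-monomial estimate giving $\epsilon_j^{wt(K)+l}$ with a uniform gap $\delta_0>0$, plus a Taylor remainder absorbed by choosing $N\geq 2m$) supplies precisely that. The one notational slip is that a merely $\mathcal{C}^\infty$-smooth $f$ has a real Taylor expansion in monomials $z^K\bar z^{K'}w^l\bar w^{l'}$ rather than just $z^K w^l$, but since $|\bar z_k|=|z_k|$ and the weight hypothesis controls $wt(K+K')+l+l'$, your estimates apply verbatim.
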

 For monomials with weight order $\leq 1$, we have the following lemmas.
 \begin{lemma}\label{Cn-spherical-convergence} Let $p,q\in \mathbb N^n$ be two multi-indices. Then, for all polynomials $P$ one has
$$
\epsilon_j^{-1}\left |D^{p} \overline{D}^q P(\alpha_j)\tau_j^{p+q}\right|\to 0
$$
for $|p|+|q|>2$. In addition, if $|p|=|q|=1$, then 
$$
\epsilon_j^{-1}\left |D^{p} \overline{D}^q P(\alpha_j)\tau_j^{p+q}\right|\lesssim 1.
$$
Moreover, if $P(z)-\delta \sigma(z)$ is plurisubharmonic for some $\delta>0$, then
\begin{align*}
\epsilon_j^{-1}\sum_{k, l=1}^n \frac{\partial^2P}{\partial z_k\partial \bar z_l} (\alpha_j)\tau_{jk}\tau_{jl} w_k\bar w_l\gtrsim m_1^2|w_1|^2+\cdots+m_n^2|w_n|^2.
\end{align*}
\end{lemma}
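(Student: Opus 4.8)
The plan is to prove all three statements by a direct computation once the dilation is unwound: by construction $\tau_{jk}=\epsilon_j^{1/2}|\alpha_{jk}|^{1-m_k}$, equivalently $\tau_{jk}^2=\epsilon_j|\alpha_{jk}|^{2-2m_k}$. For the first two assertions I would reduce to a single monomial. By linearity it suffices to estimate $\epsilon_j^{-1}|D^p\overline{D}^q(z^a\bar z^b)(\alpha_j)\,\tau_j^{p+q}|$ for each monomial $c_{ab}z^a\bar z^b$ of $P$, and since $P$ is $\Lambda$-homogeneous of weighted degree $1$ every such monomial satisfies $wt(a+b)=\sum_k(a_k+b_k)/(2m_k)=1$. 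Now $D^p\overline{D}^q(z^a\bar z^b)$ vanishes unless $p\leq a$ and $q\leq b$ componentwise, in which case it equals $\frac{a!}{(a-p)!}\frac{b!}{(b-q)!}z^{a-p}\bar z^{b-q}$, so $|D^p\overline{D}^q(z^a\bar z^b)(\alpha_j)|\lesssim\prod_k|\alpha_{jk}|^{a_k+b_k-p_k-q_k}$. Multiplying by $\tau_j^{p+q}=\prod_k\tau_{jk}^{p_k+q_k}=\epsilon_j^{(|p|+|q|)/2}\prod_k|\alpha_{jk}|^{(1-m_k)(p_k+q_k)}$ and collecting exponents, I obtain
\[
\epsilon_j^{-1}\,|D^p\overline{D}^q(z^a\bar z^b)(\alpha_j)\,\tau_j^{p+q}|\;\lesssim\;\epsilon_j^{(|p|+|q|)/2-1}\prod_k|\alpha_{jk}|^{a_k+b_k-m_k(p_k+q_k)}.
\]

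Next I would bring in condition (c) of Definition~\ref{lambda-tangent}. Putting $\rho_j:=|\alpha_{j1}|^{2m_1}$, condition (c) gives $|\alpha_{jk}|^{2m_k}\approx\rho_j$, hence $|\alpha_{jk}|\approx\rho_j^{1/(2m_k)}$ for every $k$, and therefore $\prod_k|\alpha_{jk}|^{a_k+b_k-m_k(p_k+q_k)}\approx\rho_j^{\,wt(a+b)-(|p|+|q|)/2}=\rho_j^{\,1-(|p|+|q|)/2}$ by the $\Lambda$-homogeneity of $P$. Plugging this into the displayed bound, each monomial contributes a quantity $\approx(\epsilon_j/\rho_j)^{(|p|+|q|)/2-1}$ to $\epsilon_j^{-1}|D^p\overline{D}^qP(\alpha_j)\tau_j^{p+q}|$. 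Condition (b) of Definition~\ref{lambda-tangent} says $\mathrm{dist}(\eta_j,\partial\Omega)=o(|\alpha_{jk}|^{2m_k})$, so with the standing choice $\epsilon_j\approx\mathrm{dist}(\eta_j,\partial\Omega)$ we get $\epsilon_j/\rho_j\to 0$; hence when $|p|+|q|>2$ the exponent is positive and each monomial contribution tends to $0$, whereas when $|p|=|q|=1$ the exponent is $0$ and each contribution is $\approx 1$. Summing over the finitely many monomials of $P$ gives the first two assertions.

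For the third assertion I would argue separately, because it is a lower bound and cannot be extracted from the estimates above. A direct computation shows the complex Hessian of $\sigma(z)=\sum_k|z_k|^{2m_k}$ is diagonal, with $\frac{\partial^2\sigma}{\partial z_k\partial\bar z_l}(\alpha_j)=\delta_{kl}\,m_k^2|\alpha_{jk}|^{2m_k-2}$. Since $\tau_{jk}^2=\epsilon_j|\alpha_{jk}|^{2-2m_k}$, the anisotropic dilation is calibrated exactly so that $|\alpha_{jk}|^{2m_k-2}\tau_{jk}^2=\epsilon_j$ for each $k$, whence
\[
\sum_{k,l=1}^n\frac{\partial^2\sigma}{\partial z_k\partial\bar z_l}(\alpha_j)\,\tau_{jk}\tau_{jl}\,w_k\bar w_l=\epsilon_j\sum_{k=1}^n m_k^2|w_k|^2 .
\]
Because $P-\delta\sigma$ is plurisubharmonic we have $dd^c P\geq\delta\,dd^c\sigma$; evaluating both complex Hessians at $\alpha_j$ and contracting against the vector $(\tau_{j1}w_1,\ldots,\tau_{jn}w_n)$ yields $\sum_{k,l}\frac{\partial^2P}{\partial z_k\partial\bar z_l}(\alpha_j)\tau_{jk}\tau_{jl}w_k\bar w_l\geq\delta\,\epsilon_j\sum_k m_k^2|w_k|^2$, and dividing by $\epsilon_j$ gives the claimed inequality, with implied constant $\delta$.

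The main obstacle, as I see it, is not a single hard estimate but making sure the hypotheses are invoked in the right places: the substance of the lemma is exactly that conditions (b) and (c) of Definition~\ref{lambda-tangent}, together with the $\Lambda$-homogeneity of $P$, conspire to turn $D^p\overline{D}^qP(\alpha_j)\tau_j^{p+q}$ into the clean power $(\epsilon_j/\rho_j)^{(|p|+|q|)/2-1}$ of the single small quantity $\epsilon_j/\rho_j$. Without condition (c) the product $\prod_k|\alpha_{jk}|^{a_k+b_k-m_k(p_k+q_k)}$ cannot be pinned to a power of one quantity and the scheme breaks down; and it is worth noting that (b) is needed in its strict form $\epsilon_j=o(|\alpha_{jk}|^{2m_k})$ for the first assertion, while the non-strict inequality $\epsilon_j\lesssim|\alpha_{jk}|^{2m_k}$ — the same one underlying~(\ref{tau-estimate}) — already suffices for the second.
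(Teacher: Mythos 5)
Your proposal is correct and follows essentially the same route as the paper: reduce to a single monomial, unwind $\tau_{jk}=\epsilon_j^{1/2}|\alpha_{jk}|^{1-m_k}$, use conditions (b) and (c) together with the $\Lambda$-homogeneity of $P$ to collapse everything into the power $(\epsilon_j/|\alpha_{j1}|^{2m_1})^{(|p|+|q|)/2-1}$, and for the lower bound contract $dd^cP\geq\delta\,dd^c\sigma$ with the dilated vector, where the diagonal Hessian of $\sigma$ is exactly calibrated so that $|\alpha_{jk}|^{2m_k-2}\tau_{jk}^2=\epsilon_j$. Your explicit remarks on where $\Lambda$-homogeneity and the strict form of (b) enter are accurate and consistent with the paper's (more terse) argument.
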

\begin{proof} It suffices to prove the lemma for $P(z)=z^K\bar z^L$ and $K\geq p, L\geq q$. Then we have
\begin{align*}
\epsilon_j^{-1}\left |D^{p} \overline{D}^q P(\alpha_j)\tau_j^{p+q}\right|&= \epsilon_j^{-1} |\alpha_{j1}|^{k_1+l_1}\left(\dfrac{\tau_{j1}}{|\alpha_{j1}|}\right)^{p_1+q_1}\cdots |\alpha_{jn}|^{k_n+l_n}\left(\dfrac{\tau_{jn}}{|\alpha_{jn}|}\right)^{p_n+q_n}\\
&= \left(\dfrac{|\alpha_{j1}|^{2m_1}}{\epsilon_j}\right)^{\frac{k_1+l_1}{2m_1}-\frac{p_1+q_1}{2}}\cdots \left(\dfrac{|\alpha_{jn}|^{2m_n}}{\epsilon_j}\right)^{\frac{k_n+l_n}{2m_n}-\frac{p_n+q_n}{2}}\\
&\approx \left(\dfrac{|\alpha_{j1}|^{2m_1}}{\epsilon_j}\right)^{\sum\limits_{s=1}^n\frac{k_s+l_s}{2m_s}-\frac{p_j+q_j}{2}}=\left(\dfrac{|\alpha_{j1}|^{2m_1}}{\epsilon_j}\right)^{1-\frac{|p|+|q|}{2}}.
\end{align*}
Therefore,  we get
$$
\epsilon_j^{-1}\left |D^{p} \overline{D}^q P(\alpha_j)\tau_j^{p+q}\right|\to 0
$$
as $j\to\infty$ for $|p|+|q|>2$ and 
$$
\epsilon_j^{-1}\left |D^{p} \overline{D}^q P(\alpha_j)\tau_j^{p+q}\right|\lesssim 1
$$
for $|p|+|q|=2$. Finally, by Remark \ref{remark-strongly-h-extendible} one obtains that
\begin{align*}
\epsilon_j^{-1}\sum_{k, l=1}^n \frac{\partial^2P}{\partial z_k\partial \bar z_l} (\alpha_j)\tau_{jk}\tau_{jl} w_k\bar w_l&\gtrsim \epsilon_j^{-1}\sum_{k, l=1}^n \frac{\partial^2\sigma}{\partial z_k\partial \bar z_l} (\alpha_j) \tau_{jk}\tau_{jl} w_j\bar w_l\\
&\gtrsim\epsilon_j^{-1}\left(m_1^2|\alpha_1|^{2m_1-2}\tau_{j1}^2|w_1|^2+\cdots+m_n^2|\alpha_n|^{2m_n-2}\tau_{jn}^2 |w_n|^2\right)\\
&\gtrsim m_1^2|w_1|^2+\cdots+m_n^2|w_n|^2
\end{align*}
for every $w\in \mathbb C^n$.
\end{proof}
In the same fashion we have the following lemma.
\begin{lemma}\label{Cn-spherical-convergence-higher-order} Let $Q(z)$ be a polynomial in $z\in \mathbb C^n$ such that $Q\in \mathcal{O}(1,\Lambda)$. Then we have
$$
\epsilon_j^{-1}\left |D^{p} \overline{D}^q Q(\alpha_j)\tau_j^{p+q}\right|\to 0
$$
as $j\to\infty$ for $|p|+|q|\geq 2$. 
\end{lemma}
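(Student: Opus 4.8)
The plan is to mirror the proof of Lemma \ref{Cn-spherical-convergence}, but now using the sharper vanishing condition $Q\in\mathcal O(1,\Lambda)$ in place of the weaker ``no pluriharmonic monomials'' hypothesis. By linearity it suffices to treat a single monomial $Q(z)=z^K\bar z^L$ with $wt(K)+wt(L)=\sum_{s=1}^n\frac{k_s+l_s}{2m_s}>1$ (this is precisely what $Q\in\mathcal O(1,\Lambda)$ forces, since $D^\alpha\overline D^\beta Q(0)=0$ whenever $\sum(\alpha_j+\beta_j)/2m_j\le 1$), and with $K\ge p$, $L\ge q$ componentwise so that the derivative does not vanish identically.

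First I would carry out exactly the computation in the proof of Lemma \ref{Cn-spherical-convergence}: writing $\tau_{jk}=|\alpha_{jk}|\bigl(\epsilon_j/|\alpha_{jk}|^{2m_k}\bigr)^{1/2}$ and using condition (c) of Definition \ref{lambda-tangent} (i.e. $|\alpha_{j1}|^{2m_1}\approx\cdots\approx|\alpha_{jn}|^{2m_n}$), one gets
\begin{align*}
\epsilon_j^{-1}\bigl|D^{p}\overline D^{q}Q(\alpha_j)\tau_j^{p+q}\bigr|
&\approx\prod_{s=1}^n\Bigl(\tfrac{|\alpha_{js}|^{2m_s}}{\epsilon_j}\Bigr)^{\frac{k_s+l_s}{2m_s}-\frac{p_s+q_s}{2}}
\approx\Bigl(\tfrac{|\alpha_{j1}|^{2m_1}}{\epsilon_j}\Bigr)^{\bigl(wt(K)+wt(L)\bigr)-\frac{|p|+|q|}{2}}.
\end{align*}
By (\ref{tau-estimate}) (equivalently by condition (b)), the base $|\alpha_{j1}|^{2m_1}/\epsilon_j\to\infty$. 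Hence the whole expression tends to $0$ provided the exponent is negative, i.e. provided $wt(K)+wt(L)<\frac{|p|+|q|}{2}$. Since $|p|+|q|\ge 2$ is assumed, it is enough to have $wt(K)+wt(L)\le 1$ — but that is the opposite of our hypothesis. So this crude bound is not yet sufficient, and the genuine point of the lemma lies here.

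The fix is to exploit the relation between the weight of $(K,L)$ and the weight of $(p,q)$. Because $\lambda_s=1/2m_s\le 1/2$ for every $s$ (the $2m_s$ are even integers $\ge 2$), each differentiated variable contributes weight at most $1/2$, so
$$
wt(K)+wt(L)-\bigl(wt(K-p)+wt(L-q)\bigr)=wt(p)+wt(q)\le \tfrac{|p|+|q|}{2}.
$$
Thus the exponent above equals $\bigl(wt(K-p)+wt(L-q)\bigr)+\bigl(wt(p)+wt(q)-\tfrac{|p|+|q|}{2}\bigr)$, where the second bracket is $\le 0$. If the second bracket is strictly negative we are done as before. The remaining (boundary) case is $wt(p)+wt(q)=\tfrac{|p|+|q|}{2}$, which forces $m_s=1$ for every index $s$ with $p_s+q_s>0$; then the exponent is $wt(K-p)+wt(L-q)$, and I claim this is still negative: if it were $\ge 0$ then $wt(K)+wt(L)=\bigl(wt(K-p)+wt(L-q)\bigr)+\bigl(wt(p)+wt(q)\bigr)\ge 0+\tfrac{|p|+|q|}{2}\ge 1$, and one checks that equality throughout would make $wt(K)+wt(L)=1$, contradicting $wt(K)+wt(L)>1$; and if $wt(K-p)+wt(L-q)>0$ strictly we can absorb a genuinely positive amount, again giving a negative overall exponent. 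Either way $\epsilon_j^{-1}|D^p\overline D^q Q(\alpha_j)\tau_j^{p+q}|\to 0$.

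The main obstacle is precisely this bookkeeping with weights: one must be careful that ``$wt(K)+wt(L)>1$'' together with ``$|p|+|q|\ge 2$'' really does force the exponent $(wt(K)+wt(L))-\tfrac{|p|+|q|}{2}$ to be $<0$ after accounting for the constraint $wt(p)+wt(q)\le\tfrac{|p|+|q|}{2}$ and the fact that $K\ge p$, $L\ge q$. An alternative, perhaps cleaner, route that avoids the case analysis: observe that $Q\in\mathcal O(1,\Lambda)$ means every monomial of $Q$ has weight $>1$; since the weights are rational with bounded denominators there is a fixed $\varepsilon_0>0$ with $wt(K)+wt(L)\ge 1+\varepsilon_0$ for all monomials occurring; combined with $wt(K-p)+wt(L-q)\ge 0$ and $wt(p)+wt(q)\le \tfrac{|p|+|q|}{2}\le \tfrac{|p|+|q|}{2}$ this gives the exponent $\le (wt(K)+wt(L)) - (wt(K-p)+wt(L-q)) - \varepsilon_0 \le \tfrac{|p|+|q|}{2}-(wt(K)+wt(L))\le -\varepsilon_0<0$ whenever... — in any event the uniform gap $\varepsilon_0$ is what one should lean on, and the convergence is then in fact at a geometric rate. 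I would write the proof ``in the same fashion'' as Lemma \ref{Cn-spherical-convergence}, inserting this weight estimate as the one new ingredient.
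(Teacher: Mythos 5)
Your computation goes wrong at the very first ``$\approx$'', and the subsequent weight bookkeeping cannot repair it, because the inequality you are trying to force on the exponent is simply false. For a monomial $Q(z)=z^K\bar z^L$ of weight $d:=wt(K)+wt(L)>1$ one has, factor by factor,
$$
|\alpha_{js}|^{k_s+l_s}\Bigl(\frac{\tau_{js}}{|\alpha_{js}|}\Bigr)^{p_s+q_s}
=\Bigl(\frac{|\alpha_{js}|^{2m_s}}{\epsilon_j}\Bigr)^{\frac{k_s+l_s}{2m_s}-\frac{p_s+q_s}{2}}\epsilon_j^{\frac{k_s+l_s}{2m_s}},
$$
so after multiplying over $s$, dividing by $\epsilon_j$ and using condition (c), the correct identity is
$$
\epsilon_j^{-1}\bigl|D^{p}\overline D^{q}Q(\alpha_j)\tau_j^{p+q}\bigr|\approx \epsilon_j^{\,d-1}\Bigl(\frac{|\alpha_{j1}|^{2m_1}}{\epsilon_j}\Bigr)^{d-\frac{|p|+|q|}{2}},
$$
which carries an extra factor $\epsilon_j^{\,d-1}$ absent from your display (the two agree only when $d=1$, the situation of Lemma \ref{Cn-spherical-convergence}). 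The exponent $d-\frac{|p|+|q|}{2}$ genuinely can be positive: take $n=1$, $m_1=2$, $Q(z)=z^3\bar z^3$ (so $d=3/2$ and $Q\in\mathcal O(1,\Lambda)$) and $p=q=1$; then $d-\frac{|p|+|q|}{2}=\frac12>0$ and the right-hand side of \emph{your} formula tends to $+\infty$. Accordingly, each of your salvage attempts fails: ``if the second bracket is strictly negative we are done'' ignores that the first bracket $wt(K-p)+wt(L-q)\geq 0$ may dominate; the ``boundary case'' asserts that $wt(K-p)+wt(L-q)$ is negative, which is impossible for nonnegative multi-indices; and the ``cleaner route'' concludes the exponent is $\leq-\varepsilon_0$, which the example above refutes.

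The actual mechanism is the neglected factor. Rewriting,
$$
\epsilon_j^{\,d-1}\Bigl(\frac{|\alpha_{j1}|^{2m_1}}{\epsilon_j}\Bigr)^{d-\frac{|p|+|q|}{2}}
=\bigl(|\alpha_{j1}|^{2m_1}\bigr)^{d-1}\Bigl(\frac{\epsilon_j}{|\alpha_{j1}|^{2m_1}}\Bigr)^{\frac{|p|+|q|}{2}-1},
$$
both factors are now under control: the second is bounded for $|p|+|q|\geq 2$ because $\epsilon_j/|\alpha_{j1}|^{2m_1}\to0$ by condition (b), and the first tends to $0$ because $\alpha_j\to0$ and $d>1$. (In the example the quantity equals $9|\alpha_j|^2\to0$.) So the lemma is true, but the driving force is $\alpha_j\to 0$ combined with $d>1$, not the sign of $d-\frac{|p|+|q|}{2}$. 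I should add that the displayed formula in the paper's own proof omits the same $\epsilon_j^{\,d-1}$ and should be read with that factor restored; the difference is that the paper then asserts the (correct) conclusion, whereas your proposal commits to intermediate claims that are demonstrably false, so the gap in your write-up is genuine and needs the correction above.
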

\begin{proof} As in the proof of Lemma \ref{Cn-spherical-convergence}, it suffices to consider $Q(z)=z^K\bar z^L$ and $K\geq p, L\geq q$ with $d:=wt(K+L)>1$. Then following the proof of Lemma \ref{Cn-spherical-convergence}, one has
$$
\epsilon_j^{-1}\left |D^{p} \overline{D}^q Q(\alpha_j)\tau_j^{p+q}\right|\approx\left(\dfrac{|\alpha_{j1}|^{2m_1}}{\epsilon_j}\right)^{d-\frac{|p|+|q|}{2}}.
$$
Therefore, we conclude that $\epsilon_j^{-1}\left |D^{p} \overline{D}^q Q(\alpha_j)\tau_j^{p+q}\right|\to 0$ as $j\to\infty$ for $|p|+|q|\geq 2$, as desired.
\end{proof}

\subsection{Proof of Theorem \ref{maintheorem1}}
Let $\Omega$ and $\xi_0\in \partial \Omega $ be as in the statement of Theorem \ref{maintheorem1}. Let $\mathcal{M}(\xi_0)=(2m_1,\ldots,2m_n,1)$ be the finite multitype of $\Omega$ at $\xi_0$ and denote by $\Lambda=(1/2m_1,\ldots,1/2m_n)$. As in Subsection \ref{Ss3.1}, one can find local coordinates $(\tilde z,\tilde w)=(\tilde z_1,\ldots,\tilde z_n,\tilde w)$ near $\xi_0$ such that $\xi_0=0$ and the local defining function $\rho(\tilde z,\tilde w)$ for $\Omega$ can be expanded near $0$ as follows:
$$
\rho(\tilde z,\tilde w)=\mathrm{Re}(\tilde w)+P(\tilde z)+Q(\tilde z,\tilde w),
$$ 
 where $P$ is a $\Lambda$-homogeneous plurisubharmonic polynomial that contains no pluriharmonic monomials, $Q$ is smooth and satisfies 
 $$
 |Q(\tilde z,\tilde w)|\leq C \left( |\tilde w|+ \sum_{j=1}^n |\tilde z_j|^{2m_j} \right)^\gamma,
 $$ 
 for some constant $\gamma>1$ and $C>0$. 
 
 By hypothesis of Theorem \ref{maintheorem1}, there exist a sequence $\{\varphi_j\}\subset \mathrm{Aut}(\Omega)$ and a point $a\in \Omega$ such that 
$\eta_j:=\varphi_j(a)$ converges uniformly $\Lambda$-tangentially to $\xi_0$. Let us write $\eta_j=(\alpha_j,\beta_j)=(\alpha_{j1},\ldots,\alpha_{jn},\beta_j)$. Then one has 
\begin{itemize}
\item[(a)] $|\mathrm{Im}(\beta_j)|\lesssim |\mathrm{dist}(\eta_j,\partial \Omega)|$;
\item[(b)] $|\mathrm{dist}(\eta_j,\partial \Omega)|=o(|\alpha_{jk}|^{2m_k})$ for $1\leq k\leq n$.
\item[(c)] $|\alpha_{j1}|^{2m_1}\approx |\alpha_{j2}|^{2m_2}\approx \cdots\approx |\alpha_{jn}|^{2m_n}$.
\end{itemize}
By following the proofs of Lemmas $4.10$, $4.11$ in \cite{Yu95}, after a change of variables
\[\begin{cases}
z:=\tilde z;\\
 w:=\tilde w+ b_1(\tilde z)\tilde w+b_2(\tilde z)\tilde w^2+b_3(\tilde z),
\end{cases}
\]
where $b_1,b_2, b_3$ are holomorphic functions of $\tilde z$ satisfying $b_k=O(|\tilde z|^2)$, $k=1,2,3$, there are local holomorphic coordinates $(z,w)$ in which $\xi_0=0$ and $\Omega$ can be described near $0$ as follows: 
 $$
 \Omega=\left\{(z,w)\in \mathbb C^{n+1}\colon\rho(z,w)=\mathrm{Re}(w)+ P(z) +R_1(z) + R_2(\mathrm{Im} w)+(\mathrm{Im} w) R(z)<0\right\},
 $$ 
 where $R_1\in \mathcal{O}(1, \Lambda),R\in \mathcal{O}(1/2, \Lambda) $, and $R_2\in \mathcal{O}(2)$. We would like to emphasize that in the new coordinates the sequence $\{\eta_j\}$ still has the properties $\mathrm{(a)}, \mathrm{(b)}$, and $\mathrm{(c)}$. 

Let $U_0$ be a fixed small neighborhood of $\xi_0=0$. Then for any sequence $\{\eta_j=(\alpha_j,\beta_j)\}$ of points converging uniformly $\Lambda$-tangentially to the origin in $U_0\cap\{\rho<0\}=:U_0^-$, we associate with a sequence of points $\eta_j'=(\alpha_{j}, a_j +\epsilon_j+i b_j)$, where $\epsilon_j>0$ and $\beta_j=a_j+i b_j$, such that $\eta_j'=(\alpha_j,\beta_j')$ with $\beta_j'= a_j +\epsilon_j+i b_j$ is in the hypersurface $\{\rho=0\}$ for every $j\in\mathbb N^*$. We note that $\epsilon_j\approx \mathrm{dist}(\eta_j,\partial \Omega)$ 

Before we begin the scaling procedure, we make several changes of coordinates. Firstly, let us consider the sequences of  translations $L_{\eta_j'}\colon \mathbb C^n\to\mathbb C^n$ , defined by
$$
L_{\eta_j'}(z,w):=(z,w)-\eta_j'=(z-\alpha_j,w-\beta_j').
$$
Then, under the change of variables $(\tilde z,\tilde w):=L_{\eta_j'}(z,w)$, i.e.,
\[
\begin{cases}
w-\beta_j'= \tilde{w};\\
z_k-\alpha_{j k}=\tilde{z}_k,\, k=1,\ldots,n,
\end{cases}
\]
one observes that $L_{\eta_j'}(\alpha_j,\beta_j)=(0',-\epsilon_j)$ for every $j\in \mathbb N^*$. 

Now let us write $w=u+iv, v=b_j+(v-b_j)=b_j+\mathrm{Im}(\tilde w)$, and $z=\alpha_j+(z-\alpha_j)=\alpha_j+\tilde z$. Since $R_2\in \mathcal{O}(2)$ and $R\in \mathcal{O}(1/2, \Lambda) $, by using Taylor's theorem, we have
\begin{equation*}
\begin{split}
R_2(v)&=R_2(b_j)+R_2'(b_j) (v-b_j) +o(|v-b_j|)=R_2(b_j)+R_2'(b_j) \mathrm{Im}(\tilde w) +o(|\mathrm{Im}(\tilde w)|);\\
vR(z)&=(b_j+(v-b_j))R(\alpha_j+\tilde z)\\
&=(b_j+\mathrm{Im}(\tilde w)) \Big(R(\alpha_j)+2\mathrm{Re}\sum\limits_{1\leq |p|\leq 2} \frac{D^p R}{p!} (\alpha_j)(\tilde z)^p 
+\frac{1}{2}\sum_{k,l=1}^n \frac{\partial^2 R}{\partial \tilde z_k\partial\overline{\tilde z_l}}(\alpha_j) \tilde z_k\overline{\tilde z_l}+o(|\tilde z|^2)\Big)\\
&=b_jR(\alpha_j)+b_j\Big(2\mathrm{Re}\sum\limits_{1\leq |p|\leq 2} \frac{D^p R}{p!}(\alpha_j) (\tilde z)^p 
+\frac{1}{2}\sum_{k,l=1}^n \frac{\partial^2 R}{\partial \tilde z_k\partial\overline{\tilde z_l}}(\alpha_j) \tilde z_k\overline{\tilde z_l}\Big)\\
&\quad+o(|\tilde z|^2)+o(|\mathrm{Im}(\tilde w)|).
\end{split}
\end{equation*}
Hence, using again Taylor's theorem we see that the hypersurface $L_{\eta_j'}(\{\rho=0\}) $ is defined by an equation of the form
\begin{equation}\label{remove-harmonic-term}
\begin{split}
\rho\left (L_{\eta_j}^{-1}(\tilde z,\tilde w)\right)&= \mathrm{Re} (\tilde w)+ R_2'(b_j) \mathrm{Im}(\tilde w) +  R(\alpha_j)\mathrm{Im}(\tilde w) +o(|\mathrm{Im}(\tilde w)|)\\
&+2\mathrm{Re}\sum\limits_{1\leq |p|\leq 2} \frac{D^pP}{p!}(\alpha_j) (\tilde z)^p+\frac{1}{2}\sum_{k,l=1}^n \frac{\partial^2 P}{\partial \tilde z_k\partial\overline{\tilde z_l}}(\alpha_j) \tilde z_k\overline{\tilde z_l}\\
&+2\mathrm{Re}\sum\limits_{1\leq |p|\leq 2} \frac{D^p R_1}{p!}(\alpha_j)  (\tilde z)^p
+\frac{1}{2}\sum_{k,l=1}^n \frac{\partial^2 R_1}{\partial \tilde z_k\partial\overline{\tilde z_l}}(\alpha_j) \tilde z_k\overline{\tilde z_l}\\
&+  b_j \Big(2\mathrm{Re}\sum\limits_{1\leq |p|\leq 2} \frac{D^p R}{p!} (\alpha_j)(\tilde z)^p 
+\frac{1}{2}\sum_{k,l=1}^n \frac{\partial^2 R}{\partial \tilde z_k\partial\overline{\tilde z_l}}(\alpha_j) \tilde z_k\overline{\tilde z_l}\Big)+o(|\tilde z|^2)=0.
\end{split}
\end{equation}

Next, to remove the pluriharmonic terms in (\ref{remove-harmonic-term}) let us define a sequence $\{Q_j\}$ of  automorphisms of $\mathbb C^{n+1}$ by
\[
\begin{cases}
w:= \tilde{w}+(R_2'(b_j) + R(\alpha_j)) i\tilde w+2\sum\limits_{1\leq |p|\leq 2} \frac{D^pP(}{p!} (\alpha_j)(\tilde z)^p+2\sum\limits_{1\leq |p|\leq 2} \frac{D^p R_1}{p!}\alpha_j)  (\tilde z)^p\\
\hskip 1cm+b_j\sum\limits_{1\leq |p|\leq 2} \frac{D^p R}{p!}(\alpha_j) ;\\
z_k:=\tilde{z}_k,\, k=1,\ldots,n.
\end{cases}
\]
Then the composite $Q_j\circ L_{\eta_j'}\in \mathrm{Aut}(\mathbb C^n)$ and satisfies that 
$$
Q_j\circ L_{\eta_j'}(\alpha_j,\beta_j)=\left(0,\ldots,0,-\epsilon_j-i(R_2'(b_j) + R(\alpha_j)) \epsilon_j\right)
$$
 for every $j\in \mathbb N^*$. Moreover, the hypersurface $Q_j\circ L_{\eta_j'}(\{\rho=0\}) $ is given by an equation of the form
\begin{align*}
\begin{split}
\rho\left (L_{\eta_j}^{-1}\circ Q_j^{-1}( z,w)\right)&= \mathrm{Re} ( w)+o(|\mathrm{Im}(w)|)+ \frac{1}{2}\sum_{k,l=1}^n \frac{\partial^2 P}{\partial z_k\partial \bar z_l}(\alpha_j) z_k\bar z_l \\
&+ \frac{1}{2}\sum_{k,l=1}^n \frac{\partial^2 R_1}{\partial z_k\partial \bar z_l}(\alpha_j) z_k\bar z_l+ \frac{b_j}{2} \sum_{k,l=1}^n \frac{\partial^2 R}{\partial z_k\partial \bar z_l}(\alpha_j) z_k\bar z_l+o(|z|^2)=0.
\end{split}
\end{align*}

Finally, let us recall the following notation
\begin{equation*}
\tau_{jk}:=|\alpha_{jk}|.\left(\dfrac{\epsilon_j}{|\alpha_{jk}|^{2m_k}}\right)^{1/2},\; 1\leq k\leq n
\end{equation*}
 and we define an anisotropic dilation $\Delta_j\colon \mathbb C^n\to \mathbb C^n$ by settings:
\begin{equation}\label{dilationj}
\Delta_j(z,w):=\Delta_{\eta_j}^{\epsilon_j} (z_1,\ldots,z_n, w)=\left(\frac{z_1}{\tau_{j1}},\ldots,\frac{z_n}{\tau_{jn}}, \frac{w}{\epsilon_j}\right).
\end{equation}
Then it follows that $\Delta_j\circ Q_j\circ L_{\eta_j'}(\alpha_j,\beta_j)=(0,\ldots,0,-1-i(R_2'(b_j) + R(\alpha_j)))\to (0',-1)$ as $j\to\infty$. Furthermore, the hypersurface $\Delta_j\circ Q_j\circ L_{\eta_j'}(\{\rho=0\}) $ is now defined by an equation of the form
\begin{align}\label{taylor-defining-function}
\begin{split}
&\epsilon_j^{-1}\rho\left (L_{\eta_j}^{-1}\circ Q_j^{-1}\circ \left(  \Delta_j \right )^{-1}(\tilde z,\tilde w)\right)\\
&= \mathrm{Re} (\tilde w)+\epsilon_j^{-1}o(\epsilon_j|\mathrm{Im}(\tilde w)|)+\frac{1}{2}\sum_{k,l=1}^n \frac{\partial^2 P}{\partial \tilde z_k\partial\overline{\tilde z_l}}(\alpha_j) \epsilon_j^{-1}\tau_{jk}\tau_{jl} \tilde z_k\overline{\tilde z_l}\\
&+\frac{1}{2}\sum_{k,l=1}^n \frac{\partial^2 R_1}{\partial \tilde z_k\partial\overline{\tilde z_l}}(\alpha_j) \epsilon_j^{-1}\tau_{jk}\tau_{jl} \tilde z_k\overline{\tilde z_l}+ \frac{\epsilon_j^{-1}b_j}{2}\sum_{k,l=1}^n \frac{\partial^2 R}{\partial \tilde z_k\partial\overline{\tilde z_l}}(\alpha_j) \tau_{jk}\tau_{jl} \tilde z_k\overline{\tilde z_l}+\cdots=0,
\end{split}
\end{align}
where the dots denote remainder terms. Note that by Lemma \ref{higher-weight} the terms with weight order greater than one must converge uniformly on compacta of  $\mathbb C^{n+1}$ to $0$. Hence, we consider only the convergence of monomials from (\ref{taylor-defining-function}) with weight order $\leq 1$.

Since the sequence $\{\eta_j:=\varphi_j(a)\}$ converges uniformly $\Lambda$-tangentially to  $\xi_0=(0',0)$, it follows that 
 $$
 \dfrac{|\alpha_{j1}|^{2m_1}}{\epsilon_j}\approx \dfrac{|\alpha_{j2}|^{2m_2}}{\epsilon_j}\approx\cdots\approx \dfrac{|\alpha_{jn}|^{2m_n}}{\epsilon_j}.
 $$
Thus Lemma \ref{Cn-spherical-convergence} yields
$$
\epsilon_j^{-1}\left |\frac{D^{p} \overline{D}^q P}{p!q!}(\alpha_j)\tau_j^{p+q}\right| \to 0
$$
as $j\to \infty$ for $|p|+|q|>2$ and, after taking a subsequence if necessary, we may assume that
$$
a_{kl}:=\frac{1}{2}\lim_{j\to\infty}\frac{\partial^2 P}{\partial \tilde z_k\partial\overline{\tilde z_l}}(\alpha_j) \epsilon_j^{-1}\tau_{jk}\tau_{jl}, 1\leq k,l\leq n.
$$
Moreover, by Lemma \ref{Cn-spherical-convergence-higher-order} we also have
$$
\epsilon_j^{-1}\left |\frac{D^{p} \overline{D}^q R_1}{p!q!}(\alpha_j)\tau_j^{p+q}\right| \to 0
$$
as $j\to \infty$ for $|p|+|q|\geq 2$. In addition, since $|\epsilon_j^{-1}b_j|\lesssim 1$, we obtain that
$$
\epsilon_j^{-1}b_j\left |\frac{D^{p} \overline{D}^q R}{p!q!}(\alpha_j)\tau_j^{p+q}\right| \to 0
$$
as $j\to\infty$ for $|p|+|q|\geq 1$. Therefore, after taking a subsequence if necessary, we may assume that sequence of defining funtions given in (\ref{taylor-defining-function}) converges uniformly on compacta of $\mathbb C^{n+1}$ to $\hat\rho(\tilde z,\tilde w):=\mathrm{Re}(\tilde w)+H(\tilde z)$. Consequently,  the sequence of domains $\Omega_j:=\Delta_j\circ Q_j\circ L_{\eta_j'}(U_0^-) $ converges normally to the following model
$$
M_{H}:=\left \{(\tilde z,\tilde w)\in \mathbb C^n\times\mathbb C\colon \hat\rho(\tilde z,\tilde w):=\mathrm{Re}(\tilde w)+H(\tilde z)<0\right\},
$$
where 
$$
H(\tilde z)=\sum_{k,l=1}^n a_{kl}  \tilde z_k\overline{\tilde z_l}.
$$

Note that $M_H$ is a limit of a sequence of the pseudoconvex domains $\Delta_j\circ Q_j\circ L_{\eta_j'}(U_0^-) $. Hence, $M_H$ is also pseudoconvex, and thus $H$ is plurisubharmonic. In addition, it follows directly from Lemma \ref{Cn-spherical-convergence} that $H$ is positive definite. Therefore, $M_{H}$ is biholomorphically equivalent to unit ball $\mathbb B^{n+1}$ (cf. \cite[Prop. 2]{Gra75}).

For simplicity, let us denote by $T_j:=\Delta_j\circ Q_j\circ L_{\eta_j'}$ and $\sigma_j:=T_j\circ \varphi_j\colon \varphi_j^{-1}(U_0^-)\to \Omega_j$. Then $T_j(\eta_j)=(0',-1-i(R_2'(b_j) + R(\alpha_j)))$ 
and $\{\sigma_j\}$ is a sequence of biholomorphic mappings satisfying 
$$
\sigma_j(a)=b_j:=(0',-1-i(R_2'(b_j) + R(\alpha_j)))\to b:= (0',-1).
$$
as $j\to\infty$. Thus, by Proposition \ref{pro-scaling}, after passing to a subsequence, we may assume that $\sigma_j$ converges locally uniformly to a holomorphic map $\sigma: \Omega \to M_H$ which satisfies $\sigma (a)=b$. 

On the other hand, since $\Omega$ is taut (cf. \cite[Prop. 2.2]{DN09}), the sequence $\sigma_j^{-1}\colon \Omega_j\to \varphi_j^{-1}(U_0^-)\subset \Omega$ is also normal. Since $\sigma_j^{-1} (b_j)=a \in \Omega$ with $b_j\to b\in \Omega$ as $j\to\infty$, we may also assume, after switching a subsequence, that
$\sigma_j^{-1}$ converges locally uniformly to a holomorphic map $\sigma^*: M_H \to \Omega$. It then follows from \cite[Prop. 2.1]{DN09} that $\Omega$ is biholomorphically equivalent to $M_H$. Hence, $\Omega$ is biholomorphically equivalent to $\mathbb B^{n+1}$, and thus the proof of Theorem \ref{maintheorem1} is finally complete. \hfill $\Box$
\subsection{Example} \label{ex3.1}  Denote by $E_{1,2,4}$ the domain in $\mathbb C^3$, given by
$$
E_{1,2,4}:=\{(z_1,z_2, w)\in \mathbb C^3\colon \mathrm{Re}(w)+|z_1|^4+|z_1|^2|z_2|^4+|z_2|^8<0\}.
$$
Denote by $P(z)=|z_1|^4+|z_1|^2|z_2|^4+|z_2|^8$ and $\sigma(z)=|z_1|^4+|z_2|^8$. Then a computation shows that 
\begin{align*}
dd^c P(z)&=(4|z_1|^2+|z_2|^4) d z_1 d\bar z_1+2 \bar z_1 z_2 |z_2|^2d z_1 d\bar z_2+ 2z_1\bar z_2|z_2|^2 d \bar z_1 d z_2\\
&+(16|z_2|^6+4|z_1|^2|z_2|^2) d z_2 d\bar z_2\\
&=4|z_1|^2 d z_1 d\bar z_1+16|z_2|^6 d z_2 d\bar z_2+ |z_2|^2 |z_2  d\bar z_1+2 \bar z_1 d z_2|^2\\
&\geq dd^c\sigma(z).
\end{align*}
Therefore, the origin is strongly $h$-extendible with multitype $(4,8,1)$ and thus the weight $\Lambda$ is now given by $\Lambda:=(\frac{1}{4},\frac{1}{8})$.

Now we consider the sequence $\{(\frac{1}{j^{1/4}},\frac{1}{j^{3/8}},-\frac{1}{j}-\frac{2}{j^2}-\frac{1}{j^3})\}$ that converges $\Lambda$-tangentially but not uniformly to $(0',0)$. We are going to show that $\{\eta_j\}$ is not a sequence of automorphism orbits, that is, there do not exist a sequence $\{f_j\}\subset \mathrm{Aut}(E_{1,2,4})$ and $a\in E_{1,2,4}$ such that $f_j(a)=(\frac{1}{j^{1/4}},\frac{1}{j^{3/8}},-\frac{1}{j}-\frac{2}{j^2}-\frac{1}{j^3})$ for all $n\in\mathbb N^*$. Assume for the sake of seeing a contradiction that $\{f_j\}$ and $a$ exist. Then, although we cannot apply a scaling given in the proof of Theorem \ref{maintheorem1}, an alternative scaling can be introduced as follows. Indeed, let $\rho(z_1,z_2,w)=\mathrm{Re}(w)+|z_1|^4+|z_1|^2|z_2|^4+|z_2|^8$ and let $\eta_j=(\frac{1}{j^{1/4}},\frac{1}{j^{3/8}},-\frac{1}{j}-\frac{2}{j^2}-\frac{1}{j^3})$ for every $j\in \mathbb N^*$. 
We see that $\rho(\eta_j)=-\frac{1}{j^2}\approx -\mathrm{dist}(\eta_j,\partial E_{1,2,4})$. Set
$\epsilon_j=|\rho(\eta_j)|=\frac{1}{j^2}$. 
In addition, we consider a change of variables $(\tilde z,\tilde w):=L_j(z,w)$, i.e.,
\[
\begin{cases}
w=\tilde w;\\
\displaystyle z_1-\frac{1}{j^{1/4}}= \tilde{z}_1;\\
\displaystyle z_2-\frac{1}{j^{3/8}}=\tilde{z}_2.
\end{cases}
\]
Then, a direct calculation shows that
\begin{equation*}
\begin{split}
&\rho\circ L_j^{-1} (\tilde w,\tilde z_1,\tilde z_2)=
\mathrm{Re}(w)+ |\dfrac{1}{j^{1/4}}+\tilde z_1|^4+ |\dfrac{1}{j^{1/4}}+\tilde z_1|^2 |\frac{1}{j^{3/8}}+\tilde z_2|^4+|\frac{1}{j^{3/8}}+\tilde z_2|^8 \\
                      &= \mathrm{Re}(w)+\frac{1}{j}+\frac{4}{j^{3/4}}\mathrm{Re}(\tilde z_1) +\frac{2}{j^{1/2}}|\tilde z_1|^2+\frac{1}{j^{1/2}}(2\text{Re}(\tilde z_1))^2
+ \frac{4}{j^{1/4}} |\tilde z_1|^2 \mathrm{Re}(\tilde z_1)+ |\tilde z_1|^4\\
&+\left( \frac{1}{j^{1/2}}+\dfrac{2}{j^{1/4}}\mathrm{Re}(\tilde z_1)+|\tilde z_1|^2\right) \times\\
 &\left(\frac{1}{j^{3/2}}+\frac{4}{j^{9/8}}\mathrm{Re}(\tilde z_2) +\frac{2}{j^{3/4}}|\tilde z_2|^2+\frac{1}{j^{3/4}}(2\text{Re}(\tilde z_2))^2
+ \frac{4}{j^{3/8}} |\tilde z_2|^2 \mathrm{Re}(\tilde z_2)+ |\tilde z_2|^4\right)+|\frac{1}{j^{3/8}}+\tilde z_2|^8 .
\end{split}
\end{equation*}

To define an anisotropic dilation,  let us denote by
 $\tau_{1j}:=\tau_1(\eta_j)=\frac{1}{2 j^{3/4}}, \; \tau_{2j}:=\tau_2(\eta_j)=\frac{1}{ j^{3/8}}$ for all $j\in \mathbb N^*$. Now let us introduce a sequence of  polynomial automorphisms $\phi_{{\eta}_j}$ of $\mathbb C^n$ ($j\in \mathbb N^*$), given by
\begin{equation*}
\begin{split}
&\phi_{{\eta}_j} ^{-1}(\tilde z_1,\tilde z_2,\tilde w)\\
&= \Big (\dfrac{1}{j^{1/4}}+\tau_{1j} \tilde z_1, \,\dfrac{1}{j^{3/8}}+ \tau_{2j} \tilde z_2, \,-\frac{1}{j}-\frac{1}{j^2}-\frac{1}{j^3}+\epsilon_j \tilde w- \frac{4}{j^{3/4}}\tau_{1j} \tilde z_1-\frac{2}{j^{1/2}}(\tau_{1j})^2 \tilde z_1^2\Big).
\end{split}
\end{equation*}
Therefore, for each $j\in\mathbb N^*$ the hypersurface $\phi_{\eta_j}(\{\rho=0\}) $ is then defined by

\begin{equation*}
\begin{split}
&\epsilon_j^{-1}\rho\circ \phi_{{\eta}_j} ^{-1}(\tilde z_1,\tilde z_2,\tilde w)\\
&= \epsilon_j^{-1}\rho \Big (\dfrac{1}{j^{1/4}}+\tau_{1j} \tilde z_1, \,\dfrac{1}{j^{3/8}}+ \tau_{2j} \tilde z_2, \,-\frac{1}{j}-\frac{1}{j^2}-\frac{1}{j^3}+\epsilon_j \tilde w- \frac{4}{j^{3/4}}\tau_{1j} \tilde z_1-\frac{2}{j^{1/2}}(\tau_{1j})^2 \tilde z_1^2\Big)\\
&=\mathrm{Re}(\tilde w) +|\tilde z_1|^2+ \frac{1}{16j}|\tilde z_1|^4
+ \frac{1}{2j^{1/4}}|\tilde z_1|^2\mathrm{Re}(\tilde z_1)+ \left(|\tilde z_2+1|^4-1\right)+O(\frac{1}{j^{1/2}})+O(\frac{1}{j})=0.
\end{split}
\end{equation*} 
Hence, the sequence of domains $\Omega_j:=\phi_{{\eta}_j}(E_{1,2,4}) $ converges normally to the following model
$$
M_{1,2}:=\left \{(\tilde z_1,\tilde  z_2, \tilde  w)\in \mathbb C^3\colon \mathrm{Re}(\tilde  w)+|\tilde  z_1|^2+ \left(|\tilde z_2+1|^4-1\right)<0\right\}.
$$

Finally, by the same argument as in the proof of Theorem \ref{maintheorem1} we conclude that $E_{1,2,2}$ is biholomorphically equivalent to $M_{1,2}$ that is biholomorphically equivalent to 
$$
\left \{(z_1,z_2, w)\in \mathbb C^3\colon \mathrm{Re}(w)+|z_1|^2+ |z_2|^4<0\right\}.
$$
It is absurd by \cite[Main Theorem]{CP01}.

\begin{remark}\label{model-behavior} We consider a sequence $\{(\frac{1}{j^{1/4}},\eta_{j2},-P(\frac{1}{j^{1/4}},\eta_{j2})-\frac{1}{j^2}\}$ that converges $\Lambda$-tangentially but not uniformly to $(0',0)$. Then, following the argument as in Example \ref{ex3.1}, we may assume that $\Omega_j:=\phi_{{\eta}_j}(E_{1,2,4}) $ converges normally to the following model
$$
\left \{(\tilde z_1,\tilde  z_2, \tilde  w)\in \mathbb C^3\colon \mathrm{Re}(\tilde  w)+|\tilde  z_1|^2+ \tilde P(\tilde z_2)<0\right\},
$$
where 
\begin{itemize}
\item[i)] $\tilde P(\tilde z_2)=|\tilde z_2+\alpha|^4-|\alpha|^4$ for some $\alpha \in \mathbb C$ if $|\eta_{j2}|\approx \frac{1}{j^{3/8}}$;
\item[ii)] $\tilde P(\tilde z_2)=|\tilde z_2|^4$ if $|\eta_{j2}|=o\left( \frac{1}{j^{3/8}}\right)$;
\item[iii)] $\tilde P(\tilde z_2)=|\tilde z_2|^2$ if $\frac{1}{j^{3/8}}=o(|\eta_{j2}|) $.
\end{itemize}
This indicates that our model depends deeply on the behavior of the orbit $\{\eta_j\}\subset \Omega$.

\end{remark}

\section{The behaviour of automorphism orbits accumulating at a boundary point of a pseudoconvex Levi corank one domains in $\mathbb C^{n+1}$}\label{S-corank1}
In this section, we are going to give a proof of Theorem \ref{maintheorem3}. To do that, let $\Omega$ be a domain in $\mathbb C^{n+1}$ such that $\partial \Omega$ is pseudoconvex of finite type and has corank one near $\xi_0$.
\subsection{Spherical $\frac{1}{2m}$-tangential convergence}
In what follows, let us write $z=(z_1,\ldots,z_n)$ and $z^*=(0,z_2,\ldots,z_{n})$. Let $2m$ be the D'Angelo type of $\partial \Omega$ at $\xi_0$. Without loss of generality, we may assume that $\xi_0$ and the rank of Levi form at $\xi_0$ is exactly $n-1$. Let $\rho$ be a smooth defining function for  $\Omega $. After an appropriate change of coordinates (cf. \cite{BP91, Cho94}), we can find coordinate functions $z_1,\ldots, z_n,w$ defined on a neighborhood $U_0$ of $\xi_0$ such that $\xi_0=0$ and
\begin{equation*}
\begin{split}
\rho(z)&=\mathrm{Re}(w)+ P(z_1,\bar z_1)+\sum_{\alpha=2}^{n}|z_\alpha|^2+ \sum_{\alpha=2}^{n} \mathrm{Re} (Q^\alpha(z_1,\bar z_1)z_\alpha)\\
&+O(|w| |(z,w)|+|z^*|^2|z|+|z^*|^2|z_1|^{m+1}+|z_1|^{2m+1}),
\end{split}
\end{equation*}
where $P(z_1,\bar z_1), Q^\alpha(z_1,\bar z_1)\; (2\leq \alpha\leq n)$ are homogeneous subharmonic real-valued polynomials of degree $2m$ and $m$, respectively, containing no harmonic terms.
 
\begin{define}\label{spherically-convergence}
We say that a sequence $\{\eta_j=(\alpha_j,\beta_j)\}\subset  \Omega$ with $\alpha_j=(\alpha_{j 1},\ldots,\alpha_{j n})$, \emph{converges spherically $\frac{1}{2m}$-tangentially to $\xi_0$} if
\begin{itemize}
\item[(a)] $|\mathrm{Im}(\beta_j)|\lesssim |\mathrm{dist}(\eta_j,\partial \Omega)|$;
\item[(b)] $|\mathrm{dist}(\eta_j,\partial \Omega)|=o(|\alpha_{j1}|^{2m})$;
\item[(c)]  $\Delta P(\alpha_{j1})\gtrsim |\alpha_{j1}|^{2m-2}$.
\end{itemize}
\end{define}
\begin{remark} Let $\Omega$ be a pseudoconvex domain in $\mathbb C^2$. Suppose that $\xi_0\in\partial \Omega$ is of D'Angelo finite type, say, $\tau(\partial\Omega,\xi_0)=2m$. It is known that $\Omega$ is $h$-extendible at $\xi_0$. Let $\{\epsilon_j\}\subset \mathbb R^+$ be a sequence  such that $\eta_j':=(\alpha_j,\beta_j+\epsilon_j)$ is in the hypersurface $\{\rho=0\}$ for every $j\in\mathbb N^*$. Then the condition $\mathrm{(c)}$ simply says that $\Omega$ is strongly pseudoconvex at $\eta_j'$ for every $j\in\mathbb N^*$. Consequently, the condition $\mathrm{(c)}$ is clearly satisfied if  the model $M_P:=\left\{(z,w)\in\mathbb C^2\colon \mathrm{Re}(w)+P(z_1)<0\right\}$ is a WB-domain.
\end{remark}

\subsection{Homogeneous subharmonic polynomials}
Let us write $\displaystyle P(z)=\sum_{j=1}^{2m-1} a_j z^j\bar z^{2m-j}$. Writing $z=|z| e^{i\theta}$, one defines $g(\theta)$ by
 $$
 P(z)=|z|^{2m} g(\theta).
 $$
 Then we have
 $$
 \Delta P(z)=|z|^{2m-2} \left((2m)^{2} g(\theta)+g_{\theta\theta}(\theta)\right)\geq 0.
 $$
 (See cf. \cite{BF78}.)
 
  Given a sequence $\{\epsilon_j\}\subset \mathbb R^+$, we associate the sequence $\{\tau_j\}$ given by
\begin{equation*}
\tau_{j}:=\tau(\alpha_j,\epsilon_j)=|\alpha_{j}|.\left(\dfrac{\epsilon_j}{|\alpha_{j}|^{2m}}\right)^{1/2}.
\end{equation*}
 To give a proof of Theorem \ref{maintheorem3}, it suffices to check the following lemma that is similar to Lemma \ref{Cn-spherical-convergence}.
 \begin{lemma}\label{spherical-convergence}
 We have
$$
\left |\frac{\partial^k P}{\partial z^l\partial \bar z^{k-l}}(\alpha_j)\right| \epsilon_j^{-1}\tau_j^{k}\lesssim \left(\dfrac{|\alpha_j|^{2m}}{\epsilon_j}\right)^{1-\frac{k}{2}},\; k\geq 3.
$$
In addition, if $k=2$ and $j=1$, then 
$$
\left |\frac{\partial^2 P}{\partial z\partial \bar z}(\alpha_j)\right| \epsilon_j^{-1}\tau_j^{2}=(2m)^2g(\theta_j)+g_{\theta\theta}(\theta_j).
$$
\end{lemma}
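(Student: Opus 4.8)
The plan is to reduce everything to the scalar computation for a single homogeneous polynomial and then invoke the polar-coordinate representation $P(z)=|z|^{2m}g(\theta)$ that has just been recalled. First I would fix the sequence $\tau_j=\tau(\alpha_j,\epsilon_j)=|\alpha_j|(\epsilon_j/|\alpha_j|^{2m})^{1/2}$ and observe, exactly as in the proof of Lemma \ref{Cn-spherical-convergence}, that it suffices to treat a single monomial $P(z)=z^K\bar z^L$ with $K+L=2m$ and $K\ge l$, $L\ge k-l$. For such a monomial a direct differentiation gives
\[
\left|\frac{\partial^k P}{\partial z^l\partial\bar z^{k-l}}(\alpha_j)\right|\epsilon_j^{-1}\tau_j^{k}
= C\,|\alpha_j|^{K+L-k}\,\epsilon_j^{-1}\,|\alpha_j|^{k}\left(\frac{\epsilon_j}{|\alpha_j|^{2m}}\right)^{k/2}
= C\left(\frac{|\alpha_j|^{2m}}{\epsilon_j}\right)^{1-\frac{k}{2}},
\]
using $K+L=2m$ to collapse the exponent of $|\alpha_j|$. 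This is precisely the claimed bound for $k\ge 3$ (it is in fact an equality up to the combinatorial constant $C$, and the general $P$ is a finite sum of such terms). The one subtlety is that the homogeneity degree is $2m$ rather than arbitrary, which is what makes the exponent on the right-hand side independent of $(l,k-l)$ — I would flag this explicitly since it is the only place the degree-$2m$ hypothesis is used for the $k\ge3$ part.

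For the case $k=2$ I would not expand in monomials but instead use the polar form directly. Write $\alpha_j=|\alpha_j|e^{i\theta_j}$; since $P$ is real-valued, $\frac{\partial^2 P}{\partial z\partial\bar z}=\tfrac14\Delta P$, and the formula $\Delta P(z)=|z|^{2m-2}\big((2m)^2 g(\theta)+g_{\theta\theta}(\theta)\big)$ recalled above gives
\[
\left|\frac{\partial^2 P}{\partial z\partial\bar z}(\alpha_j)\right|\epsilon_j^{-1}\tau_j^{2}
=\tfrac14|\alpha_j|^{2m-2}\big((2m)^2 g(\theta_j)+g_{\theta\theta}(\theta_j)\big)\cdot\epsilon_j^{-1}\cdot|\alpha_j|^2\frac{\epsilon_j}{|\alpha_j|^{2m}}
=\tfrac14\big((2m)^2 g(\theta_j)+g_{\theta\theta}(\theta_j)\big);
\]
here the nonnegativity of $\Delta P$ lets me drop the absolute value. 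Reconciling this with the normalization in the statement (which omits the factor $\tfrac14$, absorbing it into the Laplacian convention $\Delta = 4\partial\bar\partial$) gives exactly $\frac{\partial^2 P}{\partial z\partial\bar z}(\alpha_j)\,\epsilon_j^{-1}\tau_j^2=(2m)^2 g(\theta_j)+g_{\theta\theta}(\theta_j)$. I would state the normalization convention once at the start of the proof so this matches.

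The computation is essentially routine; the only thing that requires care — and the step I expect to be mildly delicate rather than a true obstacle — is bookkeeping the passage from the monomial reduction to the general polynomial and making sure the homogeneity is used consistently (every monomial of $P$ has total degree exactly $2m$, so the estimate is uniform across terms and no cross-term cancellation is needed). After that, the lemma is in place: combined with condition (b) of Definition \ref{spherically-convergence}, which says $|\alpha_j|^{2m}/\epsilon_j\to\infty$, the right-hand side $(|\alpha_j|^{2m}/\epsilon_j)^{1-k/2}\to 0$ for $k\ge3$, and condition (c), $\Delta P(\alpha_{j1})\gtrsim|\alpha_{j1}|^{2m-2}$, says exactly that the $k=2$ limit $(2m)^2 g(\theta_j)+g_{\theta\theta}(\theta_j)$ stays bounded below by a positive constant — which is what drives the ball model in Theorem \ref{maintheorem3}, in complete parallel with the role of Lemma \ref{Cn-spherical-convergence} in the proof of Theorem \ref{maintheorem1}.
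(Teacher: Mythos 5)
Your proposal is correct and follows essentially the same route as the paper: reduction to a single monomial of bidegree summing to $2m$ for the $k\ge 3$ estimate, and the polar-coordinate identity $\Delta P=|z|^{2m-2}\bigl((2m)^2g(\theta)+g_{\theta\theta}(\theta)\bigr)$ for the $k=2$ case. Your remark about the factor $\tfrac14$ from the convention $\Delta=4\,\partial^2/\partial z\partial\bar z$ is a fair point of care that the paper glosses over, but it is only a harmless constant.
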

\begin{proof} It suffices to prove the lemma for $P(z)=z^s \bar z^{2m-s}$ for $1\leq s\leq 2m-1$. Indeed, a simple computation shows that
\[
\frac{\partial^k P}{\partial z^l\partial \bar z^{k-l}}(z)=
\begin{cases} z^{s-l}\bar z^{2m-s-(k-l)}\; &\text{if } l\leq s, k-l\leq 2m-s\\
0 \; &\text{if otherwise}
\end{cases}
\]
and
\begin{align*}
\left |\frac{\partial^k P}{\partial z^l\partial \bar z^{k-l}}(\alpha_j)\right| \epsilon_j^{-1}\tau_j^{k}&\lesssim |\alpha_j|^{2m-k} \epsilon_j^{-1}\tau_j^{k}=|\alpha_j|^{2m} \epsilon_j^{-1}\left(\frac{\tau_j}{|\alpha_j|}\right)^{k}\\
&\lesssim \frac{|\alpha_j|^{2m}}{ \epsilon_j}\left(\frac{\epsilon_j}{|\alpha_j|^{2m}}\right)^{k/2}=\left(\frac{\epsilon_j}{|\alpha_j|^{2m}}\right)^{k/2-1}.
\end{align*}
Therefore, we get 
$$
\left |\frac{\partial^k P}{\partial z^l\partial \bar z^{k-l}}(\alpha_j)\right| \epsilon_j^{-1}\tau_j^{k}\lesssim \left(\dfrac{|\alpha_j|^{2m}}{\epsilon_j}\right)^{1-\frac{k}{2}} 
$$
for $k>2$. Furthermore, in the case that $k=2, l=1$ one has
\begin{align*}
\left |\frac{\partial^2 P}{\partial z\partial \bar z}(\alpha_j)\right| \epsilon_j^{-1}\tau_j^{2}&=|\alpha_j|^{2m-2}\left((2m)^2g(\theta_j)+g_{\theta\theta}(\theta_j)\right)|\alpha_j|^2 \left(\frac{\epsilon_j}{|\alpha_j|^{2m}}\right)\\
&=(2m)^2g(\theta_j)+g_{\theta\theta}(\theta_j).
\end{align*}
\end{proof}

\subsection{Proof of Theorem \ref{maintheorem3}}
Throughout this section, the domain $\Omega$ and the boundary point $\xi_0\in \partial \Omega $ are assumed to satisfy the hypothesis of Theorem \ref{maintheorem3}. Let $2m$ be the D'Angelo type of $\partial \Omega$ at $\xi_0$. Without loss of generality, we may assume that $\xi_0=0\in\mathbb C^n$ and the rank of Levi form at $\xi_0$ is exactly $n-1$. Let $\rho$ be a smooth defining function for  $\Omega $. After an appropriate change of coordinates (cf. \cite{BP91, Cho94}), we can find the coordinate functions $z_1,\ldots, z_n,w$ defined on a neighborhood $U_0$ of $\xi_0$ such that $\xi_0=0$ and
\begin{equation*}
\begin{split}
\rho(z,w)&=\mathrm{Re}(w)+ P(z_1,\bar z_1)+\sum_{\alpha=2}^{n}|z_\alpha|^2+ \sum_{\alpha=2}^{n} \mathrm{Re} (Q^\alpha(z_1,\bar z_1)z_\alpha)\\
&+O(|w| |(z,w)|+|z^*|^2|z|+|z^*|^2|z_1|^{m+1}+|z_1|^{2m+1}),
\end{split}
\end{equation*}
where $P(z_1,\bar z_1), Q^\alpha(z_1,\bar z_1)\; (2\leq \alpha\leq n)$ are homogeneous subharmonic real-valued polynomials of degree $2m$ and $m$, respectively, containing no harmonic terms.

By hypothesis of Theorem \ref{maintheorem3}, there exist a sequence $\{\varphi_j\}\subset \mathrm{Aut}(\Omega)$ and a point $a\in \Omega$ such that $\eta_j:=\varphi_j(a)$ converges spherically $\frac{1}{2m}$-tangentially to $\xi_0$. Let us write $\eta_j=(\alpha_j,\beta_j)=(\alpha_{j},\beta_j)$. Then one has 
\begin{itemize}
\item[(a)] $|\mathrm{Im}(\beta_j)|\lesssim |\mathrm{dist}(\eta_j,\partial \Omega)|$;
\item[(b)] $|\mathrm{dist}(\eta_{j},\partial \Omega)|=o(|\alpha_{j1}|^{2m})$;
\item[(c)] $\Delta P(\alpha_{j1})\gtrsim |\alpha_{j1}|^{2m-2}$.
\end{itemize}

Let us fix a small neighborhood $U_0$ of  the origin. For any sequence $\{\eta_j=(\alpha_j,\beta_j)\}$ of points converging $\frac{1}{2m}$-tangentially to the origin in $U_0\cap\{\rho<0\}=:U_0^-$, we associate with a sequence of points $\eta_j'=(\alpha_{j}, a_j +\epsilon_j+i b_j)$, where $\epsilon_j>0$ and $\beta_j=a_j+i b_j$, such that $\eta_j'=(\alpha_j,\beta_j')$ is in the hypersurface $\{\rho=0\}$ for every $j\in\mathbb N^*$. We note that $\epsilon_j\approx \mathrm{dist}(\eta_j,\partial \Omega)$.

By \cite[Proposition~$2.2$]{Cho94} (see also~\cite[Proposition~$3.1$]{DN09}),  for each point $\eta_j'$, there exists a biholomorphism $\Phi_{\eta_j'}$ of $\mathbb C^{n+1}$, $(z,w)=\Phi^{-1}_{\eta_j'}(\tilde z, \tilde w)$, such that
\begin{equation}\label{Eq19} 
\begin{split}
\rho(\Phi_{\eta_j'}^{-1}(\tilde z,\tilde w))&= \mathrm{Re}(\tilde  w)+ \sum_{\substack{k+l\leq 2m\\
 k,l>0}} a_{k,l}(\eta_j')\tilde z_1^k \overline{\tilde z_1}^l\\
&+\sum_{\alpha=2}^{n-1}|\tilde z_\alpha|^2+ \sum_{\alpha=2}^{n} \sum_{\substack{k+l\leq m\\
 k,l>0}}\mathrm{Re} [(b^\alpha_{k,l}(\eta_j'))\tilde  z_1^k \overline{\bar z_1}^l)\tilde z_\alpha]\\
&+O(|\tilde w| |(\tilde z, \tilde w)|+|\tilde  z^*|^2|\tilde w|+|\tilde  z^*|^2|\tilde z_1|^{m+1}+|\tilde z_1|^{2m+1}),
\end{split}
\end{equation}
where $\tilde z^*=(0,\tilde z_2,\ldots,\tilde z_{n})$.

Now let us define 
\begin{equation*}
\tau_{j1}:=|\alpha_{j}|.\left(\dfrac{\epsilon_j}{|\alpha_{j}|^{2m}}\right)^{1/2}, \tau_{j2}=\cdots=\tau_{jn}=\epsilon_j^{1/2},\; j\geq 1.
\end{equation*}
This implies that 
$$
\epsilon_j^{1/2}\lesssim \tau(\eta_j,\epsilon_j)  \lesssim \epsilon_j^{1/(2m)}.
$$

To finish the scaling procedure,  we define an anisotropic dilation $\Delta_j$ by 
\[
\Delta_j (z,w)=\left( \frac{z_1}{\tau_{j1}},\frac{z_2}{\tau_{j2}}, \ldots,\frac{z_n}{\tau_{jn}},\frac{w}{\epsilon_j}\right),\; j\in \mathbb N^*.
\]
This yields $\Delta_j\circ \Phi_{\eta_j'}(\eta_j)=(0',-1+\gamma_j)$ for some sequence $\{\gamma_j\}\subset \mathbb C$, depending on $\{ \Phi_{\eta_j'}\}$, that converges to $0$ as $j\to\infty$. Furthermore, for each $j\in \mathbb N^*$, if we set $\rho_j(z,w)=\epsilon_j^{-1}\rho\circ \Phi_{\eta_j'}^{-1}\circ(\Delta_j)^{-1}(z,w)$, then \eqref{Eq19} implies that
\[
\rho_j(z,w)=\mathrm{Re}(w)+ P_{\eta_j'}(z_1,\bar z_1)+\sum_{\alpha=2}^{n}|z_\alpha|^2+ \sum_{\alpha=2}^{n}\mathrm{Re}(Q^\alpha_{\eta_j'}(z_1,\bar z_1)z_\alpha)+O(\tau(\eta_j',\epsilon_j)),
\]
where
\begin{equation*}
\begin{split}
&P_{\eta_j'}(z_1,\bar z_1):=\sum_{\substack{k,l\leq 2m\\
 k,l>0}} a_{k,l}(\eta_j') \epsilon_j^{-1} \tau(\eta_j',\epsilon_j)^{k+l}z_1^k \bar z_1^l,\\
&Q^\alpha_{\eta_j'}(z_1,\bar z_1):= \sum_{\substack{k+l\leq m\\
 k,l>0}} b^\alpha_{k,l}(\eta_j')\epsilon_j^{-1/2} \tau(\eta_j',\epsilon_j)^{k+l}z_1^k \bar z_1^l.
\end{split}
\end{equation*}

Note that the sequence $\{\eta_j:=\varphi_j(a)\}$ converges spherically $\frac{1}{2m}$-tangentially to  $\xi_0=(0',0)$. Then $\dfrac{|\alpha_{j1}|^{2m}}{\epsilon_j}\to +\infty$ as $j\to\infty$. In addition, we have, for $1\leq k,l$ with $k+l\leq 2m$, that
$$
 a_{k,l}(\eta_j')=\frac{\partial^{k+l} \rho}{\partial \tilde z_1^k\partial \overline{\tilde z_1}^{l}}(0',0)\approx  \frac{\partial^{k+l} P}{\partial z_1^k\partial \bar z_1^{l}}(\alpha_{j1}).
$$
Therefore, by Lemma \ref{spherical-convergence} we get $a_{k,l}(\eta_j') \epsilon_j^{-1} \tau(\eta_j',\epsilon_j)^{k+l}\to 0$ as $j\to \infty$ for $k,l>0$ with $2<k+l\leq 2m$. In addition, by the condition $\mathrm{(c)}$, without loss of generality, we may assume that the limit $\displaystyle  a:=\lim_{j\to \infty}\frac{1}{2}  \frac{\partial^2 P}{\partial z_1\partial \bar z_1}(\alpha_{j1})\epsilon_j^{-1} \tau_{j1}^2 > 0$ exists, and thus we conclude that $\{P_{\eta_j'}(z_1,\bar z_1)\}$ converges uniformly on compacta to $a|z_1|^2$.

For the sequences $\{Q^\alpha_{\eta_j'}(z_1,\bar z_1)\}$ ($2\leq \alpha\leq n$),  by \cite[Lemma $2.4$]{Cho94} it follows that 
$$
|Q^\alpha_{\eta_j'}(z_1,\bar z_1)|\leq \tau(\eta_j',\epsilon_j)^{\frac{1}{10}},\, j\geq 1,
$$
for all $\alpha=2,\ldots, n$ and $|z_1|\leq 1$. Consequently, $\{Q^\alpha_{\eta_j'}\}$ converge uniformly on every compact subset of $\mathbb C$ to $0$. Therefore, after taking a subsequence if necessary, we may assume that the sequence $\{\hat \rho_j\}$ converges to the following function
$$
\hat\rho(z,w):=\mathrm{Re}(w)+a|z_1|^2+|z_2|^2+\cdots+|z_n|^2,
$$
where $\displaystyle a=\frac{1}{2}  \frac{\partial^2 P}{\partial z_1\partial \bar z_1}(\alpha_{j1})\epsilon_j^{-1} \tau_{j1}^2 > 0$. Hence, after taking a subsequence if necessary, we may assume that the sequence of domains $\Omega_j:=\Delta_j\circ \Phi_{\eta_j'}(U_0^-) $ converges normally to the Siegel half-space
$$
M_{|z|^2}:=\left \{( z,w)\in \mathbb C^{n+1}\colon \hat\rho(z,w)=\mathrm{Re}(w)+a|z_1|^2+|z_2|^2+\cdots+|z_n|^2<0\right\},
$$
which is clearly biholomorphically equivalent to the unit ball $\mathbb B^{n+1}$ (by using the Cayley transform). Moreover, by the same argument as in the proof of Theorem \ref{maintheorem1} we conclude that $\Omega$ is biholomorphically equivalent to $M_{|z|^2}$, or $\Omega$ is biholomorphically  equivalent to $\mathbb B^{n+1}$. Hence, the proof of Theorem \ref{maintheorem3} is finally complete. \hfill $\Box$

\section{The behaviour of automorphism orbits accumulating at a boundary point of a domain in $\mathbb C^2$}\label{Sec5}

In this section, we shall restrict the discussion to domains in $\mathbb C^2$. More precisely, let $\Omega$ be a pseudoconvex domain of finite type near $\xi_0\in \partial\Omega$ with the type $\tau(\partial \Omega,\xi_0)=2m$. Then the notion of uniformly $(\frac{1}{2m})$-tangential convergence given in Section \ref{S-h-extendible} is just the that of $(\frac{1}{2m})$-tangential convergence. In addition, the notion of spherically $\frac{1}{2m}$-tangential convergence is exactly given in Section \ref{S-corank1}. Therefore, Corollary \ref{maintheorem2} follows directly from Theorem \ref{maintheorem3} and Lemma \ref{spherical-convergence}. In this situation our model is biholomorphically equivalent to the unit ball $\mathbb B^2$. 

In the sequel, we consider the case that the condition $c)$ in Definition \ref{spherically-convergence} is violated, i.e. $\dfrac{\Delta P(\alpha_j)}{|\alpha_j|^{2m-2}}\to 0$ as $j\to \infty$ for some sequence $\{\alpha_j\}$ converging to the origin in $\mathbb C$ (see Definition \ref{spherically-convergence-2} below). Then our model may be defined by a homogeneous polynomial of degree larger than $2$.
\subsection{Spherically $\frac{1}{2m}$-tangentially convergence of higher order }
Let $\rho$ be a local defining function for $\Omega$ near $\xi_0$ and let the D'Angelo type $\tau(\partial \Omega,\xi_0)=2m$ be finite. As in the proof of Theorem \ref{maintheorem1}, we may assume that there are local holomorphic coordinates $(z,w)$ in which $\xi_0=0$ and $\Omega$ can be described near $0$ as follows: 
$$
 \Omega=\left\{\rho(z,w)=\mathrm{Re}(w)+ P(z)+R_1(z)+ R_2(\mathrm{Im} w)+(\mathrm{Im} w) R(z)<0\right\}.
 $$ 
 Here $P$ is homogeneous subharmonic real-valued polynomial of degree $2m$ containing no harmonic terms, $R_1\in \mathcal{O}(1, \Lambda),R\in \mathcal{O}(1/2, \Lambda) $ with $\Lambda=(\frac{1}{2m})$, and $R_2\in \mathcal{O}(2)$.  

Now let us write 
$$
P(z)= \sum_{l=1}^{2m-1} a_l z^l \bar z^{2m-l},
$$
where $a_l=\overline{a_{l'}}$ if $l+l'=2m$. Furthermore, in the polar coordinates $z=|z| e^{i \theta},$ for $l+l' \leq 2m$, we have
\begin{equation} \label{derivatives}
\frac{\partial^{l+l'} P}{\partial z^l \partial \bar z^{l'}} (z)= |z|^{2m-l-l'} g_{l,l'} (\theta);\;\frac{\partial^{l+l'} R_1}{\partial z^l \partial \bar z^{l'}} (z)= |z|^{2m-l-l'} h_{l,l'} (|z|,\theta),
\end{equation}
where $g_{l,l'}$ is a trigonometric polynomial of degree $2m-l-l'$ and  $h_{l,l'} (|z|,\theta)$ is a $\mathcal{C}^\infty$-smooth function defined on $\mathbb C$ with $h_{l,l'} (|z|,\theta)=O(|z|)$.

In analogy to Corollary \ref{maintheorem2}, we discuss a situation where our domain is biholomophically equivalent to some model defined by a homogeneous polynomial of degree larger than $2.$ To this purpose, the following variant of Definition \ref{spherically-convergence} seems to be necessary.
\begin{define}\label{spherically-convergence-2}
A sequence $\{\eta_j=(\alpha_j,\beta_j)\} \subset \Omega$ is said to converge spherically $\frac{1}{2m}$-tangentially of order $2\nu\ (2 \leq \nu \leq m)$ to $\xi_0 \in \partial \Omega$ if the following conditions hold:
\begin{itemize}
\item[(i)] $\vert \mathrm{Im} (\beta_j) \vert \leq  \mathrm{dist} (\eta_j, \partial \Omega);$
\item[(ii)] $\mathrm{dist} (\eta_j, \partial \Omega) =o(\vert \alpha_j\vert^{2m})$;
\item[(iii)] If $l+l'<2\nu$, then 
$$\lim_{j \to \infty} \Big (\frac{\mathrm{dist} (\eta_j, \partial \Omega)}{\vert \alpha_j\vert^{2m}} \Big)^{\frac{l+l'}{2\nu}-1} \left(g_{l,l'} (\theta_j)+ h_{l,l'} (|\alpha_j|,\theta_j)\right)=0,$$
where $\theta_j:=\arg (\alpha_j);$
\item[(iv)]There exists $l_0, l_0'$ with $l_0+l_0' =2\nu, \max (l_0, l'_0) \geq 1$ such that 
$$\liminf\limits_{j \to \infty}| g_{l_0, l_0'} (\theta_j)|>0.$$ 
\end{itemize}
\end{define}

We now assume that $\{\eta_j=(\alpha_j,\beta_j)\} \subset \Omega$ converges spherically $\frac{1}{2m}$-tangentially of order $2\nu\ (2 \leq \nu \leq m)$ to $\xi_0 \in \partial \Omega$. Then, for a given sequence $\{\epsilon_j\}\subset \mathbb R^+$ converging to $0$, we define 
$$
\tau_j:= \vert \alpha_j\vert \Big (\frac{\epsilon_j}{\vert \alpha_j\vert^{2m}} \Big)^{\frac{1}{2\nu}},\; j\geq 1.
$$
With this notation, by Definition \ref{spherically-convergence-2} we have the following lemma.
\begin{lemma}\label{higher-order-model} For any integers $l,l'$ with $l,l'\geq 1$, we have
\begin{itemize}
\item[(a)]
$\displaystyle \lim_{j \to \infty}\frac{\partial^{l+l'} (P+R_1)}{\partial z^l \partial \bar z^{l'}} (\alpha_j) \epsilon_j^{-1} \tau_j^{l+l'}=0
$
for $l+l'<2\nu$.
\item[(b)]
$\displaystyle \lim_{j \to \infty}\frac{\partial^{l+l'} P}{\partial z^l \partial \bar z^{l'}} (\alpha_j) \epsilon_j^{-1} \tau_j^{l+l'}=0
$
for $l+l'>2\nu$.
\item[(c)]
$\displaystyle\lim_{j \to \infty}\frac{\partial^{l+l'} R_1}{\partial z^l \partial \bar z^{l'}} (\alpha_j) \epsilon_j^{-1} \tau_j^{l+l'}=0
$
for $l+l'\geq 2\nu$.
\item[(d)] $\displaystyle\left|\frac{\partial^{l+l'} P}{\partial z^l \partial \bar z^{l'}} (\alpha_j)\right| \epsilon_j^{-1} \tau_j^{l+l'}\lesssim 1$ for $l+l'= 2\nu$ and 
$$\liminf\limits_{j \to \infty}\left|\frac{\partial^{2\nu} P}{\partial z^{l_0} \partial \bar z^{2\nu-l_0}} (\alpha_j)\right| \epsilon_j^{-1} \tau_j^{2\nu}=\liminf\limits_{j \to \infty} g_{l_0, 2\nu-l_0} (\theta_j)>0.
$$
\end{itemize}
\end{lemma}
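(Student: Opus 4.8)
The plan is to mimic the computation in Lemma \ref{spherical-convergence}, but now tracking the four regimes $l+l'<2\nu$, $l+l'=2\nu$ and $l+l'>2\nu$ separately, and using the order-$2\nu$ convergence conditions (iii)--(iv) of Definition \ref{spherically-convergence-2} in place of the strong $h$-extendibility condition. First I would record the basic identity: since $\epsilon_j\approx\mathrm{dist}(\eta_j,\partial\Omega)$, the scaling factor $\tau_j=|\alpha_j|(\epsilon_j/|\alpha_j|^{2m})^{1/(2\nu)}$ satisfies
\[
\frac{\tau_j^{\,l+l'}}{\epsilon_j}=|\alpha_j|^{\,l+l'-2m}\Big(\frac{\epsilon_j}{|\alpha_j|^{2m}}\Big)^{\frac{l+l'}{2\nu}-1},
\]
so that, using the polar-coordinate formulas \eqref{derivatives},
\[
\frac{\partial^{l+l'}P}{\partial z^l\partial\bar z^{l'}}(\alpha_j)\,\epsilon_j^{-1}\tau_j^{\,l+l'}
=\Big(\frac{\epsilon_j}{|\alpha_j|^{2m}}\Big)^{\frac{l+l'}{2\nu}-1} g_{l,l'}(\theta_j),
\]
and likewise with $g_{l,l'}$ replaced by $h_{l,l'}(|\alpha_j|,\theta_j)$ for $R_1$. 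This makes all four assertions transparent modulo bookkeeping.

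For (a) ($l+l'<2\nu$): the exponent $\frac{l+l'}{2\nu}-1$ is negative but $\epsilon_j/|\alpha_j|^{2m}\to 0$ by condition (ii), so the prefactor blows up; however condition (iii) is precisely the statement that $\big(\epsilon_j/|\alpha_j|^{2m}\big)^{\frac{l+l'}{2\nu}-1}\big(g_{l,l'}(\theta_j)+h_{l,l'}(|\alpha_j|,\theta_j)\big)\to 0$ for $l+l'<2\nu$, which gives (a) for $P+R_1$ directly. For (b) ($l+l'>2\nu$): here $\frac{l+l'}{2\nu}-1>0$, $g_{l,l'}(\theta_j)$ is bounded (it is a trigonometric polynomial of fixed degree, hence uniformly bounded in $\theta$), and $\epsilon_j/|\alpha_j|^{2m}\to 0$, so the whole expression tends to $0$. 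For (c) ($l+l'\geq 2\nu$): the $R_1$ term carries an extra factor $h_{l,l'}(|\alpha_j|,\theta_j)=O(|\alpha_j|)=o(1)$, and the remaining factor $\big(\epsilon_j/|\alpha_j|^{2m}\big)^{\frac{l+l'}{2\nu}-1}$ is bounded (it is $\leq 1$ for $l+l'\geq 2\nu$ since the base is $<1$ eventually), so the product tends to $0$. For (d) ($l+l'=2\nu$): the exponent is exactly $0$, so the expression equals $g_{l,l'}(\theta_j)$, which is bounded, giving the $\lesssim 1$ estimate; and for the distinguished pair $(l_0,l_0')=(l_0,2\nu-l_0)$ the value is exactly $g_{l_0,2\nu-l_0}(\theta_j)$, whose $\liminf$ is positive by condition (iv).

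I expect the only genuine subtlety to be the uniform boundedness of the trigonometric polynomials $g_{l,l'}$ and the remainder control $h_{l,l'}=O(|\alpha_j|)$ used in (b) and (c): one must check that the implicit constants are independent of $j$, which follows because these functions come from the fixed Taylor expansion of the defining function $\rho$ near $\xi_0$ and the finitely many indices $(l,l')$ with $l+l'\leq 2m$ are all that occur. Everything else is the elementary exponent arithmetic above, exactly parallel to the proof of Lemma \ref{spherical-convergence}. I would then remark that this lemma is the ingredient that, inserted into the scaling procedure of Section \ref{S-corank1}, forces the limiting defining function to be $\mathrm{Re}(w)+Q(z)$ with $Q(z)=\sum_{l+l'=2\nu}(\cdot)z^l\bar z^{l'}$ homogeneous of degree $2\nu$ and non-harmonic (the non-harmonicity being guaranteed by part (d)), which is what Proposition \ref{1.2new} asserts.
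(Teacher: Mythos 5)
Your proposal is correct and follows essentially the same route as the paper: the same reduction via the polar-coordinate identities \eqref{derivatives} to the factor $\bigl(\epsilon_j/|\alpha_j|^{2m}\bigr)^{\frac{l+l'}{2\nu}-1}$ times $g_{l,l'}(\theta_j)$ (resp. $h_{l,l'}(|\alpha_j|,\theta_j)$), with (a) read off from condition (iii), (b) and (c) from the sign of the exponent together with the bound $h_{l,l'}=O(|\alpha_j|)$, and (d) from the vanishing exponent and condition (iv). The bookkeeping points you flag (uniform boundedness of the finitely many $g_{l,l'}$ and the remainder control on $h_{l,l'}$) are exactly the ones implicit in the paper's estimates.
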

\begin{proof}
Indeed, by a direct computation using (\ref{derivatives}) we obtain
$$
\begin{aligned} 
\frac{\partial^{l+l'} P}{\partial z^l \partial \bar z^{l'}} (\alpha_j) \epsilon_j^{-1} \tau_j^{l+l'}
&=\vert \alpha_j\vert^{2m-l-l'} g_{l,l'} (\theta_j) \epsilon_j^{-1} \vert \alpha_j\vert^{l+l'} \Big (\frac{\epsilon_j}{\vert \alpha_j\vert^{2m}} \Big)^{\frac{l+l'}{2\nu}}\\
&=\Big (\frac{\epsilon_j}{\vert \alpha_j\vert^{2m}} \Big)^{\frac{l+l'}{2\nu}-1} g_{l,l'} (\theta_j)
\end{aligned}
$$
for $l,l'\geq 1$. Similarly, one also has
$$ 
\frac{\partial^{l+l'} R_1}{\partial z^l \partial \bar z^{l'}} (\alpha_j) \epsilon_j^{-1} \tau_j^{l+l'}=\Big (\frac{\epsilon_j}{\vert \alpha_j\vert^{2m}} \Big)^{\frac{l+l'}{2\nu}-1} h_{l,l'} (|\alpha_j|,\theta_j)
$$
for $l,l'\geq 1$. Thus the assertion (a) follows directly from the condition (iii). 

In the same fashion, for $l+l' \geq 2\nu$ we have
\begin{equation} \label{eq-15-2}
\begin{split}
\left|\frac{\partial^{l+l'} P}{\partial z^l \partial \bar z^{l'}} (\alpha_j)\right| \epsilon_j^{-1} \tau_j^{l+l'}
& \lesssim \vert \alpha_j\vert^{2m-l-l'} \epsilon_j^{-1} \vert \alpha_j\vert^{l+l'} \Big (\frac{\epsilon_j}{\vert \alpha_j\vert^{2m}} \Big)^{\frac{l+l'}{2\nu}}\lesssim\Big (\frac{\epsilon_j}{\vert \alpha_j\vert^{2m}} \Big)^{\frac{l+l'}{2\nu}-1};\\
\left|\frac{\partial^{l+l'} R_1}{\partial z^l \partial \bar z^{l'}} (\alpha_j)\right| \epsilon_j^{-1} \tau_j^{l+l'}
& \lesssim \vert \alpha_j\vert^{2m+1-l-l'} \epsilon_j^{-1} \vert \alpha_j\vert^{l+l'} \Big (\frac{\epsilon_j}{\vert \alpha_j\vert^{2m}} \Big)^{\frac{l+l'}{2\nu}}\lesssim|\alpha_j| \Big (\frac{\epsilon_j}{\vert \alpha_j\vert^{2m}} \Big)^{\frac{l+l'}{2\nu}-1}.
\end{split}
\end{equation}
This easily implies $(c)$. Moreover, if $l+l'>2\nu$, then we get
\begin{equation*}
\lim_{j \to \infty}\frac{\partial^{l+l'} P}{\partial z^l \partial \bar z^{l'}} (\alpha_j) \epsilon_j^{-1} \tau_j^{l+l'}=0,
\end{equation*}
and hence (b) follows.

Finally, it follows from (\ref{eq-15-2}) and the condition (iii) that we observe that every sequence $\Big \{\frac{\partial^{2\nu} P}{\partial z^l \partial \bar z^{l'}} (\alpha_j) \epsilon_j^{-1} \tau_j^{l+l'}\Big\}_{j \ge 1}$ is bounded if $l+l'=2\nu,$  whereas
$$\liminf\limits_{j \to \infty}\left|\frac{\partial^{2\nu} P}{\partial z^{l_0} \partial \bar z^{2\nu-l_0}} (\alpha_j)\right| \epsilon_j^{-1} \tau_j^{2\nu}=\liminf\limits_{j \to \infty} g_{l_0, 2\nu-l_0} (\theta_j)>0.
$$ 
\end{proof}
\subsection{Proof of Proposition \ref{1.2new}}
This subsection is devoted to a proof of Proposition \ref{1.2new}. Throughout this section, the domain $\Omega$ and the boundary point $\xi_0\in \partial \Omega $ are assumed to satisfy the hypothesis of Proposition \ref{1.2new}. Let $\rho$ be a local defining function for $\Omega$ near $\xi_0$ and let the D'Angelo type $\tau(\partial \Omega,p)=2m$ be finite. As in the proof of Theorem \ref{maintheorem1}, we may assume that there are local holomorphic coordinates $(z,w)$ in which $\xi_0=0$ and $\Omega$ can be described near $0$ as follows: 
\begin{align*}
 \Omega=\left\{\rho(z,w)=\mathrm{Re}(w)+ P(z)+R_1(z)+ R_2(\mathrm{Im} w)+(\mathrm{Im} w) R(z)<0\right\}.
\end{align*} 
 Here $P$ is homogeneous subharmonic real-valued polynomial of degree $2m$ containing no harmonic terms, $R_1\in \mathcal{O}(1, \Lambda),R\in \mathcal{O}(1/2, \Lambda) $ with $\Lambda=(\frac{1}{2m})$, and $R_2\in \mathcal{O}(2)$.  

By hypothesis of Proposition \ref{1.2new}, there exist a sequence $\{\varphi_j\}\subset \mathrm{Aut}(\Omega)$ and a point $a\in \Omega$ such that 
$\eta_j:=\varphi_j(a)$ converges spherically $\frac{1}{2m}$-tangentially of order $2\nu\ (2 \leq k \leq m)$ to $\xi_0$. 

Let us fix a small neighborhood $U_0$ of  the origin. For any sequence $\{\eta_j=(\alpha_j,\beta_j)\}$ of points converging $\frac{1}{2m}$-tangentially to the origin in $U_0\cap\{\rho<0\}=:U_0^-$, we associate with a sequence of points $\eta_j'=(\alpha_{j}, a_j +\epsilon_j+i b_j)$, where $\epsilon_j>0$ and $\beta_j=a_j+i b_j$, such that $\eta_j'=(\alpha_j,\beta_j')$ is in the hypersurface $\{\rho=0\}$ for every $j\in\mathbb N^*$. We note that $\epsilon_j\approx \mathrm{dist}(\eta_j,\partial \Omega)$. 

As in the proof of Theorem \ref{maintheorem1}, let us consider the sequences of  translations $L_{\eta_j'}$ and polynomial automorphisms $Q_j$ of $\mathbb C^2$ ($j\in \mathbb N^*$), defined respectively by
$$
L_{\eta_j'}(z,w):=(z,w)-\eta_j'=(z-\alpha_j,w-\beta_j')
$$
and 
\[
\begin{cases}
w&:= \tilde{w}+(R_2'(b_j) + R(\alpha_j)) i\tilde w+2\sum\limits_{k=1}^{2m}\frac{1}{k!}  \frac{\partial^k P}{\partial z^k} \tilde w^k(\alpha_j)\\
&+2\sum\limits_{k=1}^{2m}\frac{1}{k!}  \frac{\partial^k R_1}{\partial z^k} \tilde w^k(\alpha_j)+2b_j\sum\limits_{k=1}^{2m}\frac{1}{k!}  \frac{\partial^k R(\alpha_j)}{\partial z^k} \tilde w^k ;\\
z&:=\tilde{z}.
\end{cases}
\]
Then one sees that $Q_j\circ L_{\eta_j'}(\alpha_j,\beta_j)=(0,-\epsilon_j-i(R_2'(b_j) + R(\alpha_j)) \epsilon_j)$ for every $j\in \mathbb N^*$. Moreover, the hypersurface $Q_j\circ L_{\eta_j'}(\{\rho=0\}) $ is defined by an equation of the form
\begin{align*}
&\rho\left (L_{\eta_j}^{-1}\circ Q_j^{-1}( z,w)\right)= \mathrm{Re} ( w)+o(|\mathrm{Im}(w)|)+ \sum_{\substack{k+l\leq 2m\\
 k,l>0}} \frac{1}{k!l!}\frac{\partial^{k+l} P}{\partial z^k\partial \bar z^l}(\alpha_j) z^k\bar z^l \\
&+\sum_{\substack{k+l\leq 2m\\ k,l>0}} \frac{1}{k!l!}\frac{\partial^{k+l} R_1}{\partial z^k\partial \bar z^l}(\alpha_j) z^k\bar z^l+b_j \sum_{\substack{k+l\leq 2m\\ k,l>0}} \frac{1}{k!l!}\frac{\partial^{k+l} R}{\partial z^k\partial \bar z^l}(\alpha_j) z^k\bar z^l+\cdots=0,
\end{align*}
where the dots denote remainder terms. 

Now let us recall that
\begin{equation*}
\tau_{j}:=|\alpha_{j}|.\left(\dfrac{\epsilon_j}{|\alpha_{j}|^{2m}}\right)^{1/2\nu}.
\end{equation*}
and then we define an anisotropic dilation $\Delta_j$ by 
\[
\Delta_j (z,w):=\left( \frac{z}{\tau_j},\frac{w}{\epsilon_j}\right),\; j\in \mathbb N^*.
\]
This yields $\Delta_j\circ Q_j\circ L_{\eta_j'}(\alpha_j,\beta_j)=(0,-1-i(R_2'(b_j) + R(\alpha_j)))\to (0,-1)$ as $j\to\infty$. By the definition of $\tau_j$, a simple calculation shows that $\tau_{j}^{2m}=\epsilon_j.\left(\frac{\epsilon_j}{|\alpha_{j}|^{2m}}\right)^{\frac{m}{\nu}-1}\lesssim \epsilon_j$. Hence, we get the following estimates
\begin{align}\label{tau-estimate-1}
\epsilon_j^{1/2}\lesssim \tau_{j}\lesssim \epsilon_j^{1/2m}.
\end{align}
Furthermore, the hypersurface $\Delta_j\circ Q_j\circ L_{\eta_j'}(\{\rho=0\}) $ is defined by an equation of the form
\begin{equation}\label{taylor-defining-function-1}
\begin{split}
&\epsilon_j^{-1}\rho\left (L_{\eta_j}^{-1}\circ Q_j^{-1}\circ \left(  \Delta_j \right )^{-1}(\tilde z,\tilde w)\right)= \mathrm{Re} (\tilde w)+\sum_{\substack{k+l\leq 2m\\k,l>0}} \frac{1}{k!l!} \frac{\partial^{k+l} (P+R_1)}{\partial \tilde z^k\partial\overline{\tilde z}^l}(\alpha_j) \epsilon_j^{-1}\tau_{j}^{k+l} \tilde z^k\overline{\tilde z^l}\\
&+\epsilon_j^{-1} b_j\sum_{\substack{k+l\leq 2m\\k,l>0}} \frac{1}{k!l!} \frac{\partial^{k+l} R}{\partial \tilde z^k\partial\overline{\tilde z}^l}(\alpha_j) \epsilon_j^{-1}\tau_{j}^{k+l} \tilde z^k\overline{\tilde z}^l+\epsilon_j^{-1}o(\epsilon_j|\mathrm{Im}(\tilde w)|)+O(\tau(\eta_j,\epsilon_j))=0,
\end{split}
\end{equation}
where (\ref{tau-estimate-1}) yields the terms with weight greater than one equal $O(\tau(\eta_j,\epsilon_j)$. Hence, we consider only the convergence of monomials from (\ref{taylor-defining-function-1}) with weight order $\leq 1$.

Now thanks to Lemma \ref{higher-order-model}, we have
$$
 \frac{\partial^{k+l} (P+R_1)}{\partial \tilde z^k\partial\overline{\tilde z^l}}(\alpha_j)\epsilon_j^{-1}\tau_{j}^{k+l}  \to 0
$$
as $j\to \infty$ for $k+l\ne 2\nu$. For the case that $k+l=2\nu$, 
$$
\frac{\partial^{2\nu} R_1}{\partial \tilde z^k\partial\overline{\tilde z}^{2\nu-k}}(\alpha_j) \epsilon_j^{-1}\tau_{j}^{2\nu}\to 0
$$
as $j\to \infty$. Moreover, since $|\epsilon_j^{-1}b_j|\lesssim 1$, we obtain that
$$
\epsilon_j^{-1}b_j  \frac{\partial^{k+l} R}{\partial \tilde z^k\partial\overline{\tilde z}^l}(\alpha_j) \epsilon_j^{-1}\tau_{j}^{k+l} \to 0
$$
as $j\to\infty$ for any $k,l\geq 1$. In addition, since every sequence $\Big \{\frac{\partial^{2\nu} P}{\partial z^l \partial \bar z^{l'}} (\alpha_j) \epsilon_j^{-1} \tau_j^{l+l'}\Big\}_{j \ge 1}$ is bounded if $l+l'=2\nu$, hence after taking a subsequence if necessary we may assume that
$$
a_{k,2\nu-k}:=\frac{1}{k!(2\nu-k)!}\lim_{j\to\infty}\frac{\partial^{2\nu} P}{\partial \tilde z^k\partial\overline{\tilde z}^{2\nu-k}}(\alpha_j) \epsilon_j^{-1}\tau_{j}^{2\nu}, 1\leq k,l\leq n.
$$
 Therefore, after taking a subsequence if necessary, we may achieve that sequence of defining funtions given in (\ref{taylor-defining-function-1}) converges uniformly on compacta of $\mathbb C^{2}$ to $\hat\rho:=\mathrm{Re}(\tilde w)+Q(\tilde z)$. Consequently,  the sequence of domains $\Omega_j:=\Delta_j\circ Q_j\circ L_{\eta_j'}(U_0^-) $ converges normally to the following model
$$
M_{Q}:=\left \{(\tilde z,\tilde w)\in \mathbb C^2\colon \hat\rho(\tilde z,\tilde w):=\mathrm{Re}(\tilde w)+Q(\tilde z)<0\right\},
$$
where 
$$
Q(\tilde z)=\sum_{k=1}^{2\nu-1} a_{k,2\nu-k}  \tilde z^k\overline{\tilde z}^{2\nu-k}.
$$

Note that since $M_Q$ is pseudoconvex, we infer that $Q$ is subharmonic. Moreover, by the condition (iii), we have $a_{l_0,2\nu-l_0}\ne 0$ for some $1\leq l_0\leq 2\nu-1$, and hence the polynomial $Q$ is not harmonic. Furthermore, by the same argument as in the proof of Theorem \ref{maintheorem1} we conclude that $\Omega$ is biholomorphically equivalent to $M_{Q}$. Hence, the proof of Proposition \ref{1.2new} is eventually complete. \hfill $\Box$
\subsection{The Kohn-Nirenberg type domains}
The Kohn-Nirenberg domain $\Omega_{KN}$ is given firstly in \cite{KN73} by
$$
\Omega_{KN}:=\left\{(z,w)\in \mathbb C^2\colon \mathrm{Re}(w)+ |z|^8+\frac{15}{7}|z|^2\mathrm{Re}(z^6)<0\right\}
$$
This domain is a weakly pseudoconvex domain of finite type having no supporting function at the origin. In addition, $\Omega_{KN}$ cannot be biholomorphically equivalent to a bounded domain in $\mathbb C^2$ with real analytic boundary. Indeed, if this would be the case, then \cite{BP99} tells us that $\Omega_{KN}$ is biholomorphically equivalent to the ellipsoid 
$$
\left\{(z,w)\in \mathbb C^2\colon |w|^2+|z|^{8}<1\right\},
$$ 
which is biholomorphically equivalent to the model $\left\{(z,w)\in \mathbb C^2\colon \mathrm{Re}(w)+|z|^{8}<1\right\}$.  This leads to a contradition by \cite[Main Theorem]{CP01}.

Furthermore, $\Omega_{KN}$ is a $WB$-domain and thus it satisfies the hypothesis of Theorem \ref{maintheorem2}. Therefore, Theorem \ref{maintheorem2} shows that if a sequence of automorphism orbits $\{\eta_j=\varphi_j(a)\}$, $\{\varphi_j\}\subset \mathrm{Aut}(\Omega_{KN})$, converges to the origin, then it must converge $\frac{1}{8}$-nontangentially to the origin.

The condition that $\eta_j'$ is strongly pseudoconvex ensures that a limit of the scaling domains (the model $M_H$) is just biholomorphically equivalent to the unit ball $\mathbb B^2$.  If this condition is not satisfied such as $\Delta P(\alpha_j)=0$ for all $j$, then the model now becomes $M_Q=\{(z,w)\in \mathbb C^2\colon \mathrm{Re}(w)+Q(z)<0\}$ for some homogeneous subharmonic polynomial $Q$ with $\deg(Q) \geq 4$. The following example will demonstrate these phenomena. It also describes a situation when Proposition \ref{1.2new} may occur.
\begin{example} \label{KNex}
Let $\widetilde\Omega_{KN}$ be a domain in $\mathbb C^2$ defined by
$$
\widetilde\Omega_{KN}:=\left\{(z,w)\in \mathbb C^2\colon \mathrm{Re}(w)+ |z|^8-\frac{16}{7}|z|^2\mathrm{Re}(z^6)<0\right\}.
$$
We see that the sequence $\big\{(\dfrac{1}{\sqrt[8]{j}},\dfrac{9}{7j}-\dfrac{1}{j^2})\big\} $
converges $\frac{1}{8}$-tangentially but not spherically $\frac{1}{8}$-tangentially to $(0,0)$. 

 Let $\rho(z,w)=\mathrm{Re}(w)+ |z|^8-\dfrac{16}{7}|z|^2\mathrm{Re}(z^6)$ and let $\eta_j=( \dfrac{1}{\sqrt[8]{j}},\dfrac{9}{7j}-\dfrac{1}{j^2})$ for every $j\in \mathbb N^*$. This implies that $\rho(\eta_j)=\dfrac{9}{7j}-\dfrac{1}{j^2}-\dfrac{9}{7j}=-\dfrac{1}{j^2}\approx -\mathrm{dist}(\eta_j,\partial \widetilde\Omega_{KN})$. Set
$\epsilon_j=|\rho(\eta_j)|=\dfrac{1}{j^2}$. Then, a computation shows that
\begin{align*}
&\rho(z,w)\\
&=\mathrm{Re}(w)+ |(z-\frac{1}{j^{1/8}})+\frac{1}{j^{1/8}}|^8-\frac{16}{7}|(z-\frac{1}{j^{1/8}})+\frac{1}{j^{1/8}}|^2\mathrm{Re}(((z-\frac{1}{j^{1/8}})+\frac{1}{j^{1/8}})^6)\\
&=\mathrm{Re}(w)+\frac{1}{j^{8/8}}+\frac{8}{j^{7/8}} \mathrm{Re}(z-\frac{1}{j^{1/8}})+\frac{16}{j^{6/8}} |z-\frac{1}{j^{1/8}}|^2+\frac{12}{j^{6/8}} \mathrm{Re}((z-\frac{1}{j^{1/8}})^2)\\
&+\frac{8}{j^{5/8}}\mathrm{Re}((z-\frac{1}{j^{1/8}})^3)+\frac{48}{j^{5/8}}|z-\frac{1}{j^{1/8}}|^2\mathrm{Re}(z-\frac{1}{j^{1/8}})\\
&+\frac{2}{j^{4/8}} \mathrm{Re}((z-\frac{1}{j^{1/8}})^4)+\frac{32}{j^4}|z-\frac{1}{j^{1/8}}|^2\mathrm{Re}((z-\frac{1}{j^{1/8}})^2)+\frac{36}{j^{4/8}}|z-\frac{1}{j^{1/8}}|^4\\
&-\frac{16}{7}\left(\frac{1}{j^{8/8}}+\frac{8}{j^{7/8}}\mathrm{Re}(z-\frac{1}{j^{1/8}})+\frac{21}{j^{6/8}} \mathrm{Re}((z-\frac{1}{j^{1/8}})^2)+\frac{7}{j^{6/8}}|z-\frac{1}{j^{1/8}}|^2\right)\\
&-\frac{16}{7}\left(\frac{35}{j^{5/8}}\mathrm{Re}((z-\frac{1}{j^{1/8}})^3)+\frac{21}{j^{5/8}}|z-\frac{1}{j^{1/8}}|^2\mathrm{Re}(z-\frac{1}{j^{1/8}})\right)\\
&-\frac{16}{7}\left(\frac{21}{j^{4/8}}\mathrm{Re}((z-\frac{1}{j^{1/8}})^4)+\frac{35}{j^{4/8}}|z-\frac{1}{j^{1/8}}|^2\mathrm{Re}\big((z-\frac{1}{j^{1/8}})^2\big)\right)+O(\frac{1}{j^{3/8}}|z-\frac{1}{j^{1/8}}|^5)\\
&=\mathrm{Re}(w)-\frac{9}{7j^{8/8}}-\frac{72}{7j^{7/8}} \mathrm{Re}(z-\frac{1}{j^{1/8}})-\frac{36}{j^{6/8}} \mathrm{Re}((z-\frac{1}{j^{1/8}})^2)-\frac{72}{j^{5/8}}\mathrm{Re}((z-\frac{1}{j^{1/8}})^3)\\
&\quad-\frac{46}{j^{4/8}}\mathrm{Re}((z-\frac{1}{j^{1/8}})^4)-\frac{48}{j^{4/8}}|z-\frac{1}{j^{1/8}}|^2\mathrm{Re}((z-\frac{1}{j^{1/8}})^2)\\
&\quad+\frac{36}{j^{4/8}}|z-\frac{1}{j^{1/8}}|^4+O(\frac{1}{j^{3/8}}|z-\frac{1}{j^{1/8}}|^5).
\end{align*}

To define an anisotropic dilation,  let us denote by
 $\tau_j:=\tau(\eta_j)=\frac{1}{j^{3/8}}$ for all $j\in \mathbb N^*$. Now let us introduce a sequence of  polynomial automorphisms $\phi_{{\eta}_j}^{-1}$ of $\mathbb C^2$, given by
\[
\begin{cases}
 z=\dfrac{1}{\sqrt[8]{j}}+\tau_j \tilde  z\\
w=\epsilon_j \tilde  w+\dfrac{9}{7j}+\dfrac{72}{7j^{7/8}} \tau_j \tilde  z+\dfrac{36}{j^{6/8}}  \tau_j^2 \tilde  z^2+\dfrac{72}{j^{5/8}}\tau_j^3\tilde   z^3+\dfrac{46}{j^{4/8}}\tau_j^4 \tilde  z^4.
\end{cases}
\]
Therefore, we have
\begin{equation*}
\begin{split}
\epsilon_j^{-1}\rho\circ \phi_{{\eta}_j} ^{-1}(\tilde z,\tilde w)&= \mathrm{Re}(\tilde w) + 36|\tilde z|^4-48|\tilde z|^2\mathrm{Re}(\tilde z^2)
+O(\frac{1}{j^{1/4}}).
\end{split}
\end{equation*} 
We now show that there do not exist a sequence $\{f_j\}\subset \mathrm{Aut}(\widetilde\Omega_{KN})$ and $a\in \widetilde\Omega_{KN}$ such that
$\eta_j=f_j(a)\to (0,0)\in \partial \widetilde\Omega_{KN}$ as $n\to \infty$. Indeed, suppose otherwise that there exist such a sequence $\{f_j\}$ 
and such a point $a\in \widetilde\Omega_{KN}$. Then by the same argument as in the proof of Theorem \ref{maintheorem2}, $\widetilde\Omega_{KN}$ is biholomorphically equivalent to the following domain
 $$
 D:=\left\{(\tilde z,\tilde w)\in \mathbb C^2: \mathrm{Re}(\tilde w)+36 |\tilde z|^4-48|\tilde z|^2\mathrm{Re}(\tilde z^2)<0\right\}.
 $$
 However, since the D'Angelo type of $\partial D$ is always less than or is equal to $4$, it follows that $D$ is not biholomorphically equivalent to $\widetilde\Omega_{KN}$ (cf. \cite[Main Theorem]{CP01}). It is impossible.
\end{example}

\begin{acknowledgement}Part of this work was done while the first author was visiting the Vietnam Institute for Advanced Study in Mathematics (VIASM). He would like to thank the VIASM for financial support and hospitality. 
\end{acknowledgement}

\bibliographystyle{plain}

\end{document}